\newtheorem{theorem}{Theorem}
\newtheorem{lemma}{Lemma}[section]
\newtheorem{example}{Example}[section]
\numberwithin{equation}{section}
\numberwithin{equation}{section}
\begin{document}
	
	\title{Existence and stability analysis of solutions for a ultradian glucocorticoid rhythmicity and acute stress model.}
	
	\author{ {\sc Casey Johnson}\\[2pt]{\small Institute of Mathematical Sciences, Claremont Graduate University, Claremont, USA,}\\
		[6pt]
		{\sc Roman M. Taranets}\\[2pt]
		{\small Institute of Applied Mathematics and Mechanics of the NASU,  Sloviansk, Ukraine,}\\
		[6pt]
		{\sc Natalia Vasylyeva}\\[2pt]
		{\small Institute of Applied Mathematics and Mechanics of the NASU,  Sloviansk, Ukraine,} \\[6pt]
		{\sc and} \\ [6pt]
		{\sc Marina Chugunova} \\ [2pt]
		{\small Institute of Mathematical Sciences, Claremont Graduate University, Claremont, USA,}\\
		[6pt]
		{\rm [Received on ************]}\vspace*{6pt}}
	\pagestyle{headings}
	\markboth{C. JOHNSON, R. M. TARANETS, M. CHUGUNOVA, N. VASYLYEVA}{\rm EXISTANCE AND STABILITY ANALYSIS OF HPA AXIS MODEL}
	\maketitle
	
	\maketitle
	\begin{abstract}
		{The hypothalamic pituitary adrenal (HPA) axis responds to physical and mental challenge to maintain homeostasis in part by controlling the body's cortisol level. Dysregulation of the HPA axis is implicated in numerous stress-related diseases. 
			For a structured model of the HPA axis that includes the glucocorticoid receptor but does not take into account the system response delay, we analyze linear and non-linear stability of stationary solutions. For a second mathematical model that describes the mechanism of the HPA axis self-regulatory activities and takes into account a delay of system response, we prove existence of periodic solutions under certain assumptions on ranges of parameter values and analyze stability of these solutions with respect to the time delay value.}
	\end{abstract}
	
	\section{Introduction}
	
	Hormones control a vast array of bodily functions, including sexual reproduction and sexual development, whole-body metabolism, blood glucose levels and so on (see \cite{Walker}). Hormones are produced, in main, and released from diverse places including the hypothalamus,
	pituitary, and the adrenal gland. Hormones are capable of a diffusion whole-body effect, as well as a localized effect, depending on the distance between the production site and the site of action. In many ways, the endocrine system is similar to the nervous system, in that it is an intercellular signaling system in which cells communicate via cellular secretions. Further, the distance between the sites of hormone production and action, and the complexities inherent in the mode of transport, make it extraordinarily difficult to construct quantitative models of hormonal control.

	The hypothalamus pituitary adrenal axis is a central neuroendocrine system, which consists of the hypothalamus, pituitary, and adrenal glands. The paraventricular nucleus of the hypothalamus secrets corticotropin releasing hormone (CRH), which is transferred to the pituitary and stimulates the synthesis and release of adrenocorticotropic hormone (ACTH). ACTH moves through the bloodstream and reaches the adrenal gland in which it stimulates the secretion of cortisol. In response to stress, the concentrations of the HPA axis hormones are increased.

	Disruption of HPA axis regulation is known to contribute to a number of stress-related disorders. For example, increased cortisol has been shown in patients with major depressive disorder (see \cite{Juruena, Gold}), and decreased cortisol has been observed in people with post-traumatic stress disorder (see \cite{Rohleder}).

	Multiple models of the hypothalamus-pituitary-adrenal (HPA) axis have been developed to characterize the oscillations seen in the hormone concentrations and to examine HPA axis dysfunction. Most of these models have been constructed using deterministic coupled ordinary differential equations (see \cite{Gonzalez}). A major inconsistency among different existing HPA models  that was mentioned in \cite{Hosseinichimeh} is related to their  treatment of  the circadian and ultradian oscillations. For example, the authors of \cite{Sriram} and \cite{Bairagi} assumed that both oscillations can be generated inside the HPA axis system by interaction of its    elements;  the authors of \cite{Vinther}, \cite {Andersen}, and \cite{Jelic} treat the circadian and ultradian oscillations differently assuming that only ultradian oscillations are HPA axis based but at the same time  circadian rhythms are due to external input. Only one model made no explicit assumption about the origin of the oscillations and was developed to replicate the HPA axis response to CRH injection (see \cite{Conrad}). It has also been suggested that the ultradian rhythm arises from the introduction of a time delay (see \cite{Bairagi}). Other models based on delay-differential equations  include  \cite{Lenbury} and \cite{Gupta}.
	
	To determine if delay-differential equations could predict the general features of cortisol production, the experimental data was compared to a simulated cortisol curve in \cite{Gupta}. Experimental fitting of ACTH for the model was not possible since hypothalamic derived CRH cannot be measured.

	Inclusion of the glucocorticoid receptor in a hypothalamic pituitary adrenal axis model reveals 'bi-stability' (see \cite{Gupta}). To be more concrete, there arises a nonlinear Gauss type function with compact support, which is characterized by the parameter $p_4$. This (Hill function) arises as a result of 'inner' nonlinearity in the physiological system which is produced by the stress impulse, which is activated by the outer impulse that is called by an acute stress. This situation is provided formally by the two parameters $p_4$ and $CRH$. The amplitude of the Hill function determines glucocorticoid receptor $GR$ density in the pituitary, which is coupled nonlinearly in reaction with regulated levels of  $CORT$, which in turn mediate a wide range of physiological processes, including metabolic, immunological and cognitive function (see \cite{Rankin1,McEwen}).

	The stress response is subserved by the stress system which is located both in the central nervous system and the periphery. The principal effects to the stress system include the corticotropin-releasing hormone $CRH$. The secretion of $CRH$ causes the anterior pituitary to synthesize adrenocorticotropin $ACTH$ which then stimulates the adrenal glands to release cortisol that regulate the blood concentration of $CRH$ and $ACTH$ via different negative feedback mechanisms. The $HPA$ axis is the subject of intensive research in endocrinology. This model is based on the feed-forward and feedback interactions between the anterior pituitary and adrenal glands. Because responsiveness of the stress system to stressors is crucial for life, it is important to consider the simpler case when distributions of hormones in the system become unstable by action on stress, and further to consider influence on the delay time as response of the physiological system on action on stress.
	
	Mathematically, it means that we can consider two mathematical models: the first one is described by a system of ordinary differential equations with initial distributions of hormones at a point $t=0$, and the second one is based upon a system of differential equations with initial distributions of hormones on the interval $[-\tau, 0 )$, where $\tau$ is a time delay. It turns out that bi-stability is present in both models, i.e limit distributions of hormones may be stable or unstable depending on parameter values. In the model with a time delay, periodic solutions arise for special distributions of hormones when there is a connection between the concentrations of hormones at end points. Thus, the initial distributions of hormones must be coupled in a special manner. This condition may be considered as a 'normal' reaction of the organism on action of stress. 
	
	In this paper, we  study a system of delay differential equations (see \cite{Gupta,Rankin1}):
	\begin{equation}\label{a-1}
	\frac{da}{dt} = \frac{CRH}{1+p_2 or} - p_3 a =: f_1,
	\end{equation}
	\begin{equation}\label{a-2}
	\frac{dr}{dt} = \frac{(or)^2}{p_4 + (or)^2} +  p_5 - p_6 r =: f_2,
	\end{equation}
	\begin{equation}\label{a-3}
	\frac{do}{dt} = a(t-\tau) - o =: f_3
	\end{equation}
	with initial conditions
	\begin{equation}\label{in-con}
	a(t) = a_{\tau}(t)  \ \forall \, t \in [-\tau,0], \  r(0) = r_0, \ o(0) = o_0,
	\end{equation}
	where
	\begin{equation}\label{smo-con}
	0 \leqslant a_{\tau}(t) \in C^1[-\tau,0], \  r_0 > 0, \  o_0 > 0.
	\end{equation}
	Based on the principles of mass action kinetics, these equations
	describe the production and degradation of the hormones $ACTH$ ($a$), i.\,e. adrenocorticotropin,
	and $CORT$ ($o$), i.\,e. cortisol in humans and corticosterone in rodents, as well as glucocorticoid receptor $GR$ density
	($r$) in the pituitary. Here, $CRH$ is corticotrophin-releasing hormone,
	the parameters $p_{2-6}$ represent dimensionless forms of rate
	constants of the system, and the dimensionless parameter $\tau$
	represents a discrete delay, which accounts for the delayed
	response of the adrenal gland to $ACTH$. The
	dimensionless time $t =0$ corresponds to the maximal value of an
	$ACTH$ pulse.

	The paper is organized as follows. In Section~\ref{sec-2}
	we study the stability of the system (\ref{a-1})--(\ref{a-3}) without delay
	for the given initial distributions of hormones. In Section~\ref{sec-3}
	we analyze the system (\ref{a-1})--(\ref{a-3}) with delay in terms of stability, solvability, and existence
	of periodic solutions.  Moreover, the existence of $\tau$-periodic solutions is proved.
	The system without delay is always asymptotically stable for strictly positive parameter values. When delay is considered, the system remains unstable after a certain delay value. The theoretical results are then compared with numerical simulations.

	\section{Stability analysis of the model without time delay }\label{sec-2}
	
	We consider the following nonlinear ODEs without delay:
	\begin{equation}\label{eq-1}
	\frac{da}{dt} = \frac{A}{1 + p_2 or} - p_3 a,
	\end{equation}
	\begin{equation}\label{eq-2}
	\frac{dr}{dt} = \frac{(or)^2}{p_4 +  (or)^2} + p_5 - p_6 r,
	\end{equation}
	\begin{equation}\label{eq-3}
	\frac{do}{dt} = a - o,
	\end{equation}
	with initial conditions
	\begin{equation}\label{in-con-nnn}
	a(0) = a_0 \geqslant 0, \  r(0) = r_0 > 0, \ o(0) = o_0 > 0,
	\end{equation}
	where  $A := CRH \geqslant 0$, and $p_i \geqslant 0$. Using  Picard's iteration method,
	we can show existence of a unique global in time  non-negative solution. The proof is similar to the
	one of Theorem~\ref{Th-exist}.

	\begin{lemma}\label{L-sss}
		Assume that $A > 0$ and $p_i > 0$. Then the system (\ref{eq-1})--(\ref{eq-3}) has a unique fixed point and this point is asymptotically stable.
	\end{lemma}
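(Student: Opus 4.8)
I would first locate the fixed point by successive elimination, reducing its existence and uniqueness to a single scalar equation. Setting the right-hand sides of (\ref{eq-1})--(\ref{eq-3}) to zero gives $a=o$ from (\ref{eq-3}), then $r=\tfrac1{p_6}\big(g(or)+p_5\big)$ from (\ref{eq-2}), where $g(s):=\tfrac{s^{2}}{p_4+s^{2}}$, and finally $p_3\,a\,(1+p_2\,or)=A$ from (\ref{eq-1}). Taking $u:=or$ as the only unknown, the middle relation reads $r=R(u):=\tfrac1{p_6}\big(g(u)+p_5\big)$, hence $o=u/R(u)$ and $a=o$, and the whole system collapses to
\begin{equation*}
\Phi(u):=\frac{p_3\,p_6\,u\,(1+p_2 u)}{g(u)+p_5}=A .
\end{equation*}
Since $\Phi$ is continuous on $(0,\infty)$ with $\Phi(0^{+})=0$ and $\Phi(u)\to\infty$ as $u\to\infty$, a root $u^{\ast}$ exists by the intermediate value theorem, and it is unique as soon as $\Phi$ is strictly increasing. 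Clearing denominators, $\Phi'(u)$ has the sign of a quintic in $u$ all of whose coefficients are manifestly nonnegative except the $u^{2}$-coefficient $p_4(2p_5-1)$, so uniqueness reduces to a positivity estimate for that quintic. The associated triple $(a^{\ast},r^{\ast},o^{\ast})=\big(o^{\ast},\,R(u^{\ast}),\,u^{\ast}/R(u^{\ast})\big)$ then has strictly positive coordinates.

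Next I would linearise. Writing $J$ for the Jacobian of the right-hand side at $(a^{\ast},r^{\ast},o^{\ast})$ and using $a^{\ast}=o^{\ast}$, $p_6 r^{\ast}=g(u^{\ast})+p_5$ and $A=p_3 o^{\ast}(1+p_2 u^{\ast})$, one obtains
\begin{equation*}
J=\begin{pmatrix}-p_3 & -\alpha\,o^{\ast} & -\alpha\,r^{\ast}\\[2pt] 0 & o^{\ast}g'(u^{\ast})-p_6 & r^{\ast}g'(u^{\ast})\\[2pt] 1 & 0 & -1\end{pmatrix},\qquad \alpha:=\frac{A\,p_2}{(1+p_2 u^{\ast})^{2}}>0 .
\end{equation*}
Expanding $\det(\lambda I-J)$ along the first column gives a monic cubic $\lambda^{3}+c_2\lambda^{2}+c_1\lambda+c_0$; with $\gamma:=o^{\ast}g'(u^{\ast})-p_6$, $\theta:=\tfrac{p_2 u^{\ast}}{1+p_2 u^{\ast}}\in(0,1)$ and the identity $\alpha r^{\ast}=p_3\theta$, this simplifies to $c_2=p_3+1-\gamma$, $c_1=p_3(1+\theta)-(p_3+1)\gamma$ and $c_0=p_3\,(p_6\theta-\gamma)$.

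For a monic cubic all roots have negative real part if and only if $c_2>0$, $c_0>0$ and $c_1c_2>c_0$ (Routh--Hurwitz), which by the linearisation theorem yields local asymptotic stability of the fixed point. To verify the three inequalities I would use the identity obtained by multiplying $\gamma$ by $r^{\ast}>0$,
\begin{equation*}
\gamma\,r^{\ast}=u^{\ast}g'(u^{\ast})-g(u^{\ast})-p_5 ,
\end{equation*}
together with the elementary bounds $0\le s\,g'(s)\le\tfrac12$ and $s\,g'(s)-g(s)\le\tfrac18$ for all $s\ge0$ (each from a one-variable optimisation of the rational function involved). These pin down the size of the otherwise sign-indefinite quantity $\gamma$, after which $c_2>0$, $c_0>0$ and $c_1c_2-c_0>0$ follow by rearrangement.

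The main obstacle is precisely that the $(2,2)$ entry $o^{\ast}g'(u^{\ast})-p_6$ of $J$ need not be negative, so $J$ is not sign-definite and no soft argument (diagonal dominance, monotone-system comparison) applies directly; the Routh--Hurwitz inequalities have to be extracted from the fixed-point relations and the $\tfrac12$/$\tfrac18$ bounds on the Hill function. Moreover, the same quintic positivity that gives uniqueness of $u^{\ast}$ is what controls the sign of $\gamma$, so I would treat the monotonicity of $\Phi$ and the Routh--Hurwitz verification in tandem rather than separately.
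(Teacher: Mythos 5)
Your skeleton coincides with the paper's: reduce the nullcline system to a single scalar equation (you use $u=or$ as the unknown, the paper solves the first nullcline as a quadratic for $a$ and works in $r$ --- a cosmetic difference), then linearise and check the Routh--Hurwitz inequalities $c_2>0$, $c_0>0$, $c_1c_2>c_0$ for the same characteristic cubic (your $\gamma$ and $\theta$ are the paper's $K_2-p_6$ and $K_3/p_3$). The trouble is that the two steps you defer are exactly the steps carrying all the difficulty, and the first cannot be completed as described. The quintic whose positivity you say uniqueness ``reduces to'' is
\begin{equation*}
2p_2(1+p_5)u^5+(1+p_5)u^4+4p_2p_4p_5u^3+p_4(2p_5-1)u^2+2p_2p_4^2p_5u+p_4^2p_5,
\end{equation*}
and for $p_2,p_5$ small this is $u^4-p_4u^2+O(p_2+p_5)$, which is negative on essentially all of $(0,\sqrt{p_4})$. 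Hence $\Phi$ is genuinely non-monotone on an open set of admissible parameters, and for suitable $A$ the equation $\Phi(u)=A$ has three roots --- consistent with Example~\ref{ex:unstable}, where $p_2=0$ produces three fixed points (one a saddle), a configuration that persists for small $p_2>0$ by continuity of the roots. No positivity estimate will rescue this step; uniqueness needs extra hypotheses. (To be fair, the paper's own uniqueness argument --- that an increasing $f_1$ and an increasing convex $f_2$ can intersect only once --- is equally unjustified and founders on the same example.)

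The stability half has the same defect in milder form. Your identity $\gamma r^*=u^*g'(u^*)-g(u^*)-p_5$ combined with $p_6r^*=g(u^*)+p_5$ gives $\gamma=p_6\,\tfrac{u^*g'(u^*)-g(u^*)-p_5}{g(u^*)+p_5}<p_6$, and the bounds $sg'(s)\leqslant\tfrac12$ and $sg'(s)-g(s)\leqslant\tfrac18$ add nothing sharper than $\gamma\leqslant\tfrac{1}{8r^*}$; neither yields $c_2=p_3+1-\gamma>0$ when $p_6>p_3+1$, let alone the more delicate $c_1c_2>c_0$. ``Follow by rearrangement'' is therefore an assertion, not a proof. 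The paper instead writes $K_2=\tfrac{2}{r^*}(p_6r^*-p_5)(1+p_5-p_6r^*)$ and $K_3$ explicitly in terms of $r^*$, restricts $r^*$ to $[\,p_5/p_6,(p_5+1)/p_6\,]$, and converts each Routh--Hurwitz inequality into the sign of a quadratic in $r^*$; you would need to reproduce a computation of that type, and even then the verification is delicate (the paper's claim that $2p_6^2x^2+(p_3+1-p_6(1+4p_5))x+2p_5(p_5+1)>0$ on the whole admissible interval is itself not evident when $p_6$ is large and $p_3,p_5$ are small). In short, your outline correctly identifies where the obstacles lie but does not overcome either of them.
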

	
	\begin{proof}[Proof of Lemma~\ref{L-sss}]
		
		By (\ref{eq-1})--(\ref{eq-3}) we obtain the following equations for the nullclines:
		\begin{equation}\label{eq-4}
		o = a, \ \  \tfrac{A}{1 + p_2 or} - p_3 a = 0, \ \ \tfrac{(or)^2}{p_4 +  (or)^2} + p_5 - p_6 r = 0.
		\end{equation}
		The algebraic system (\ref{eq-4}) has a nonnegative solution in the following domain:
		\begin{equation}\label{con-1}
		D:= \{  (a,r,o) \in R^3_+ :  a =o,  \, 0 \leqslant a \leqslant \tfrac{A}{p_3}, \, \tfrac{p_5}{p_6} \leqslant r \leqslant \tfrac{ p_5 +1}{p_6}  \}.
		\end{equation}
		From (\ref{eq-4}) we have
		\begin{equation}\label{eq-5}
		o = a  = \tfrac{1}{2p_2 r} \biggl[ \sqrt{ 1 + \tfrac{4p_2 A}{p_3}\,r} - 1 \biggr],
		\end{equation}
		\begin{equation}\label{eq-6}
		\tfrac{1}{4 p_2^2} \biggl[ \sqrt{ 1 + \tfrac{4p_2 A}{p_3}\,r} - 1 \biggr]^2 = \tfrac{p_4(p_6 r - p_5)}{1 + p_5 - p_6 r},
		\quad \mbox{i.\,e.} \quad
		A = \tfrac{p_3}{r} \sqrt{\tfrac{p_4(p_6 r - p_5)}{1 + p_5 - p_6 r}}
		\Bigl( 1 + p_2 \sqrt{\tfrac{p_4(p_6 r - p_5)}{1 + p_5 - p_6 r}}  \Bigr).
		\end{equation}
		
		Note that the equation (\ref{eq-6}) has a unique solution $r^* \in (\frac{p_5}{p_6}, \frac{p_5 + 1}{p_6})$ in $D$.
		Really, the function $f_1(r) := \frac{1}{4 p_2^2} \biggl[ \sqrt{ 1 + \frac{4p_2 A}{p_3}\,r} - 1 \biggr]^2$ is
		nonnegative and monotone increasing on $[0,+\infty)$ such that $f_1(0) = 0$, $f_1(+\infty) = +\infty$,
		but the function $f_2(r) := \frac{p_4(p_6 r - p_5)}{1 + p_5 - p_6 r}$ is
		nonnegative and monotone increasing on $[\frac{p_5}{p_6}, \frac{p_5 + 1}{p_6}]$ such that
		$f''_2(r) > 0$, $f_2(\frac{p_5}{p_6}) = 0$,
		$f_2(\frac{p_5 + 1}{p_6}) = +\infty$. Therefore, there is only one intersection of $f_1(r)$ and
		$f_2(r)$ on the interval $[\frac{p_5}{p_6}, \frac{p_5 + 1}{p_6}]$. On the other hand, let us denote by
		$$
		z :=  \sqrt{ 1 + \tfrac{4p_2 A}{p_3}\,r} - 1 \geqslant 0 \Rightarrow r = \tfrac{p_3}{4p_2 A} z (z+2).
		$$
		Then (\ref{eq-6}) can be rewritten in the following form
		\begin{equation}\label{eq-7}
		z^4  + 2 z^3 + C_1 z^2 + C_2 z - C_3 = 0,
		\end{equation}
		where
		$$
		C_1 : = \tfrac{4p_2(p_2 p_3 p_4 p_6 - A(p_5 + 1))}{p_3 p_6}, \ C_2 := 8 p_2^2 p_4 > 0, \
		C_3 := \tfrac{16 A  p_2^3 p_4 p_5}{p_3 p_6} > 0.
		$$
		So, we can find the explicit value of $r^* \in \left(\frac{p_5}{p_6}, \frac{p_5 + 1}{p_6}\right)$
		as a solution of (\ref{eq-7}).
		
		As a result, the system (\ref{eq-1})--(\ref{eq-3})  has only one fixed point $(a^*, r^*, o^*)$ in $D$.
		Here,
		$$
		a^* = o^* = \tfrac{1}{2p_2 r^*} \biggl[ \sqrt{ 1 + \tfrac{4p_2 A}{p_3}\,r^*} - 1 \biggr] =
		\tfrac{1}{r^*}  \sqrt{\tfrac{p_4 (p_6 r^* - p_5)}{1 + p_5 - p_6 r^*}},
		$$
		and $r^*$ is the solution of (\ref{eq-6}) or (\ref{eq-7}).
		
		Next, we find the Jacobian matrix $J^*$ for (\ref{eq-1})--(\ref{eq-3}) at the
		fixed point $(a^*, r^*, o^*)$. 
		\begin{equation*}
		J^* = \left(
		\begin{array}{ccc}
		-p_3 & - K_1 & - K_3 \\
		0 & -p_6 + K_2  & K_4 \\
		1 & 0 & -1 \\
		\end{array}
		\right) ,
		\end{equation*}
		where
		\begin{multline*}
		K_1 = \tfrac{A p_2 a^*}{(1 + p_2 a^* r^*)^2 } =
		\tfrac{2 A ( \sqrt{ 1 + \frac{4p_2 A}{p_3}\,r^*} - 1 ) }
		{r^*(1 +  \sqrt{ 1 + \frac{4p_2 A}{p_3}\,r^*}  )^2} =
		\tfrac{A p_2 \sqrt{p_4(p_6 r^* - p_5) (1 + p_5 - p_6 r^*)}}{r^*
			(p_2\sqrt{p_4(p_6 r^* - p_5)} + \sqrt{1 + p_5 - p_6 r^*}  )^2}  =  \\
		\tfrac{p_2 p_3 p_4 (p_6 r^* - p_5) }{(r^*)^2(p_2\sqrt{p_4(p_6 r^* - p_5)} + \sqrt{1 + p_5 - p_6 r^*}  )^2}
		\Bigl( 1 + p_2 \sqrt{\tfrac{p_4(p_6 r^* - p_5)}{1 + p_5 - p_6 r^*}}  \Bigr) \geqslant 0,
		\end{multline*}
		\begin{multline*}
		K_2 =  \tfrac{2 p_4 r^* (a^*)^2 }{(p_4 +  (a^* r^*)^2)^2} = \tfrac{1}{2 p_2^2 p_4 r^*}\bigl[ \sqrt{ 1 + \tfrac{4p_2 A}{p_3}\,r^*} - 1
		\bigr ]^2 (1 + p_5 - p_6 r^*)^2 =    \\
		\tfrac{2}{r^*}(p_6 r^* - p_5) (1 + p_5 - p_6 r^*) \geqslant 0 \text{ and } \ 0 \leqslant K_2 \leqslant 2 p_6 (  \sqrt{1+ p_5}  - \sqrt{ p_5} )^2,
		\end{multline*}
		\begin{eqnarray*}
			K_3 = \tfrac{r^*}{a^*} K_1 = \tfrac{  p_2 p_3 \sqrt{ p_4( p_6 r^* - p_5 ) }  } {  \sqrt{ 1 + p_5 - p_6 r^* }  + p_2 \sqrt{ p_4( p_6 r^* - p_5 ) }   } \text{ and }0 \leqslant K_3 \leqslant  p_3,
		\end{eqnarray*}
		
		$$
		K_4 =  \tfrac{r^*}{a^*} K_2 = \tfrac{ 2 r^*}{ \sqrt{p_4}}  (1 + p_5 - p_6 r^*)^{\frac{3}{2}} \sqrt{(p_6 r^* - p_5 )} \geqslant 0.
		$$
		Next, we will analyze the stability of the fixed point. First, we look for eigenvalues for $J^*$. So,
		$$
		|J^* - \lambda I | = \left |
		\begin{array}{ccc}
		-p_3 -\lambda & -K_1  &- K_3\\
		0 & -p_6 + K_2 - \lambda &   K_4\\
		1 & 0 & -1-\lambda \\
		\end{array}
		\right | =0,
		$$
		whence we obtain the characteristic equation:
		$$
		(\lambda +1)(\lambda + p_3)(\lambda +p_6 - K_2) +  K_3 (\lambda + p_6) = 0,
		$$
		i.\,e.
		\begin{equation}\label{eq-8}
		\lambda^3 + \alpha_1 \lambda^2 + \alpha_2 \lambda + \alpha_3 =0,
		\end{equation}
		where 
		$$
		\alpha_1 = p_3 + p_6 - K_2 + 1,\
		\alpha_2 = 
		p_3 + p_6 - K_2 + p_3(p_6 - K_2) + K_3,\
		\alpha_3 = p_3(p_6 - K_2) + p_6 K_3.
		$$
		Let us denote by
		$$
		\Delta := 18 \alpha_1 \alpha_2 \alpha_3 - 4 \alpha_1^3 \alpha_3 + \alpha_1^2 \alpha_2^2 - 4 \alpha_2^3 -
		27 \alpha_3^2.
		$$
		If $\Delta > 0$, then (\ref{eq-8}) has three distinct real roots. If $\Delta = 0$, then (\ref{eq-8}) has
		a multiple root and all of its roots are real. If $\Delta < 0$, then (\ref{eq-8}) has
		one real root and two  complex  roots.
		
		To analyze stability, we will use Lemma \ref{H-0} in the Appendix. Let $ x_1:=\frac{p_5}{p_6}\leqslant x:= r^* \leqslant x_2:=\frac{p_5+1}{p_6}$. Then in our case,  
		
		\begin{equation*}\alpha_1>0 \Leftrightarrow 0 \leqslant K_2 < p_6 + p_3 + 1  \Leftrightarrow 
		2p_6^2  x^2  + (p_3 +1 - p_6(1+4p_5) ) x + 2p_5(p_5 +1) > 0
		\end{equation*}
		
		\noindent which is true for all $ p_i > 0$;
		
		\begin{equation*}
		\alpha_3>0 \Leftrightarrow 0 \leqslant K_2 < p_6  ( 1 + \tfrac{ K_3 }{p_3}  ) 
		\Leftrightarrow
		(x- x_1)(x - x_2) < \tfrac{ x }{2p_6} [ 1 + \tfrac{ p_2 \sqrt{ p_4(x - x_1) }  }{   \sqrt{ x_2 - x }  + p_2  \sqrt{ p_4(x - x_1) } }  ] 
		\end{equation*}
		which is true for all  $p_i > 0$;

		\begin{eqnarray*}\alpha_1\alpha_2>\alpha_3 \Leftrightarrow 0 \leqslant K_2  < p_6 + \tfrac{1}{2} \biggl[  p_3 + 1 + \tfrac{K_3}{ 1 + p_3} 
			\\- \sqrt{   (p_3 +2p_6 + 1 + \tfrac{K_3}{ 1 + p_3} )^2  - 4 [(p_3 +p_6)(1+p_6) + K_3 ]  }\biggr ] \\
			= p_6 + \tfrac{1}{2} [  p_3 + 1 + \tfrac{K_3}{ 1 + p_3}  - \sqrt{  ( \tfrac{K_3}{ 1 + p_3} )^2  - 2 [1 - \tfrac{2p_6}{1+p_3} ] K_3  + (p_3 -1)^2  } ]  \end{eqnarray*}   
		
		\noindent provided   
		$$
		\biggl( \tfrac{K_3}{ 1 + p_3}\biggr )^2  - 2 \biggl[1 - \tfrac{2p_6}{1+p_3} \biggr] K_3  + (p_3 -1)^2     > 0,    
		$$
		but if  $\biggl( \frac{K_3}{ 1 + p_3} \biggr)^2  - 2 \biggl[1 - \frac{2p_6}{1+p_3} \biggr] K_3  + (p_3 -1)^2   \leqslant 0$  then these are true for all $p_i>0$. Thus the fixed point is asymptotically stable.
	\end{proof}
	
	\subsection{Stability for different parameter values}
	
	\noindent\emph{Case 1:} If $A = 0$  then the system (\ref{eq-1})--(\ref{eq-3}) has the fixed point $\left(0,\frac{p_5}{p_6},0\right)$.
	The corresponding characteristic equation is
	$$
	(\lambda +1)(\lambda + p_3)(\lambda +p_6 ) = 0,
	$$
	whence $\lambda_{i} = -1,\,-p_3,\, -p_6 < 0$. As a result, $\left(0,\frac{p_5}{p_6},0\right)$ is stable node.

	\noindent\emph{Case 2:} If $p_2 = 0$ then the system (\ref{eq-1})--(\ref{eq-3}) has the fixed point $\left( \frac{A}{p_3}, r^*,\frac{A}{p_3}\right)$,
	where $r^*$ is a solution of the equation:
	$$
	\tfrac{p_4}{p_4 + (\tfrac{A}{p_3})^2 r^2} = 1 + p_5 - p_6 r.
	$$ 
	
	\noindent This equation can also be written as 
	\begin{eqnarray*} r\left[ \tfrac{p_6}{p_4p_5}\left(\tfrac{A}{p_3} \right)^2r^2-\tfrac{1+p_5}{p_4p_5}\left(\tfrac{A}{p_3}\right)^2r+\tfrac{p_6}{p_5}\right]=1\end{eqnarray*}
	which yields the following cases:
	\begin{itemize}
		\item if $p_4>\left[\frac{A(1+p_5)}{p_3p_6}\right]^2$ then there is one real root;
		\item if $p_4=\left[\frac{A(1+p_5)}{p_3p_6}\right]^2$ then $r\left( r-\frac{1+p_5}{2p_6}\right)^2=\frac{p_4p_5}{p_6}$, whence
		\begin{itemize}
			\item[$ $] if $\frac{p_5A^2}{p_3^2p_6}<\frac{1}{54}$ then we have three real roots,
			\item[$ $] if $\frac{p_5A^2}{p_3^2p_6}=\frac{1}{54}$ then we have two real roots,
			\item[$ $] if $\frac{p_5A^2}{p_3^2p_6}>\frac{1}{54}$ then we have one real root;
		\end{itemize}
		\item if $p_4<\left[\frac{A(1+p_5)}{p_3p_6}\right]^2$ then $r(r-r_1)(r-r_2)=\frac{p_4p_5}{p_6}\left(\frac{p_3}{A} \right)^2$,  where \\ 
		$r_{1,2}=\frac{1}{2p_6}\left[1+p_5\pm\sqrt{(p_5+1)^2-p_4p_6^2\left(\frac{p_3}{A}\right)^2} \right]$, whence 
		\begin{itemize}
			\item[$ $] if $ \frac{ p_4 p_5} {p_6 }  ( \frac{p_3}{A} )^2  <  \frac{ (r_1 + r_2 - K)(r_2 - 2 r_1 - K)(r_1 - 2 r_2 - K) }{27}$ then we have three real roots,
			\item[$ $] if $ \frac{ p_4 p_5} {p_6 }  ( \frac{p_3}{A} )^2  =  \frac{ (r_1 + r_2 - K)(r_2 - 2 r_1 - K)(r_1 - 2 r_2 - K) }{27}$ then we have two real roots,
			\item[$ $] if $ \frac{ p_4 p_5} {p_6 }  ( \frac{p_3}{A} )^2  >  \frac{ (r_1 + r_2 - K)(r_2 - 2 r_1 - K)(r_1 - 2 r_2 - K) }{27}$ then we have one real root, 
		\end{itemize}
		where $K^2 = r_1^2 - r_1 r_2 + r_2^2$.
	\end{itemize}
	
	\noindent The corresponding characteristic equation is
	$$
	(\lambda +1)(\lambda + p_3)(\lambda + p_6 - K_2 ) = 0,
	$$
	whence $\lambda_{i} = -1,\,-p_3,\, -p_6 + K_2$.
	%
	If $K_2 < p_6$
	then $( \frac{A}{p_3}, r^*,\frac{A}{p_3})$ is stable node. If $K_2 > p_6$
	then $( \frac{A}{p_3}, r^*,\frac{A}{p_3})$ is saddle. If $K_2 = p_6$
	then it is a non-hyperbolic fixed point.

	\noindent\emph{Case 3:} If $p_3 = 0$  then the system (\ref{eq-1})--(\ref{eq-3}) has the fixed point $(+\infty,\frac{p_5 +1}{p_6},+\infty)$.
	The corresponding characteristic equation is
	$$
	\lambda(\lambda +1)(\lambda +p_6 ) = 0,
	$$
	whence $\lambda_{i} = -1,\,0,\, -p_6$. As a result, $(+\infty,\frac{p_5 +1}{p_6},+\infty)$ is non-hyperbolic fixed point.

	\noindent\emph{Case 4:} If $p_4 = 0$ then the system (\ref{eq-1})--(\ref{eq-3}) has the fixed point $\left(a^*,\frac{p_5 +1}{p_6},a^*\right)$,
	where  $$a^* =  \frac{p_6}{2p_2(p_5 +1)} \left[ \sqrt{1+ \frac{4 A p_2(p_5+1)}{p_3 p_6}} -1 \right]. $$
	In this case,  we have that $K_2 =K_3=0$ and $(\lambda +1)(\lambda + p_3)(\lambda + p_6) =0,$ whence $\lambda_i = -1,\, - p_3, \, -p_6$. Hence, the fixed point is a stable node.
	
	\noindent\emph{Case 5:}
	If  $p_2 = p_4 = 0$ then we obtain the explicit solution
	
	\begin{eqnarray*}
		a(t) &=& (a_0 - \tfrac{A}{p_3} ) e^{-p_3 t}  + \tfrac{A}{p_3}  \to  \tfrac{A}{p_3} \text{ as }t \to +\infty,\\   
		r(t) &=&  ( r_0 -  \tfrac{ p_5 +1}{p_6} )e^{-p_6 t}  + \tfrac{1+p_5}{p_6}  \to  \tfrac{1+p_5}{p_6} \text{ as }t \to +\infty,\\
		o(t) &=& (o_0 - \tfrac{A}{p_3} ) e^{-t}  + (a_0 - \tfrac{A}{p_3} ) e^{-t} \int \limits_0^t {  e^{(1-p_3) s} ds}  +  \tfrac{A}{p_3}  \to  \tfrac{A}{p_3} \text{ as }t \to +\infty. 
	\end{eqnarray*}

	\noindent\emph{Case 6:} If $p_5 = 0$ then  the one of fixed points  is $\left(\frac{A}{p_3}, 0, \frac{A}{p_3}\right)$  and
	
	$$(\lambda +1)(\lambda + p_3) (\lambda + p_6) =0,$$
	
	\noindent whence $\lambda_i = -1,\, - p_3, \, -p_6$. Hence, this fixed point is stable node. In this case, by (2.5) we obtain that

	$$\tfrac{A}{1 + p_2 a r } = p_3 a   \Leftrightarrow   r = \tfrac{1}{ p_2 a } ( \tfrac{A}{p_3 a}  - 1) \quad \text{provided} \quad 0 < a < \tfrac{A}{p_3}, $$     
	whence
	
	$$\tfrac{ (a r)^2 } { p_4 + (ar)^2  } = p_6 r$$
	implies that   $$r = 0
	\text{ or }   \tfrac{ a^2 r  } { p_4 + (ar)^2  } = p_6.$$  
	
	\noindent Hence, we find that
	
	$$a^3  + (   \tfrac{p_6(1+ p_2^2 p_4 ) }{p_2} -  \tfrac{A}{p_3} ) a^2  - 2 \tfrac{A p_6} {p_2 p_3} a = - \tfrac{p_6}{p_2} ( \tfrac{A}{p_3} )^2 \Leftrightarrow f(a) := a (a - a_1)(a - a_2) = - \tfrac{p_6}{p_2} ( \tfrac{A}{p_3} )^2,$$
	
	\noindent where
	
	$$a_{1,2} = \tfrac{1}{2} \biggl[  - (   \tfrac{p_6(1+ p_2^2 p_4 ) }{p_2} -  \tfrac{A}{p_3} )  \pm  \sqrt{   (   \tfrac{p_6(1+ p_2^2 p_4 ) }{p_2} -  \tfrac{A}{p_3} )^2 +  8 \tfrac{A p_6} {p_2 p_3}  }  \biggr] $$
	
	\noindent and $  a_1 < 0 < a_2  $.   Note that  $ a_2 \leqslant \frac{A}{p_3}$ provided  $p_2^2 p_4 \geqslant 1$. As a result, 
	
	\begin{itemize}
		\item  if  $p_2^2 p_4 \geqslant 1$ then
		\begin{itemize}
			\item[$ $] if  $f_{min}  >   - \frac{p_6}{p_2} ( \frac{A}{p_3} )^2$  then no real roots;
			
			\item[$ $] if  $f_{min}  =   - \frac{p_6}{p_2} ( \frac{A}{p_3} )^2$  then one positive real root;
			
			\item[$ $] if  $f_{min}  <   - \frac{p_6}{p_2} ( \frac{A}{p_3} )^2$  then two positive real roots;
		\end{itemize}
		\item  if  $p_2^2 p_4 < 1$ then
		\begin{itemize}
			\item[$ $] if  $f_{min}  \geqslant   - \frac{p_6}{p_2} ( \frac{A}{p_3} )^2$  then no real roots;
			
			\item[$ $] if  $f_{min}  <   - \frac{p_6}{p_2} ( \frac{A}{p_3} )^2$  then one positive real root.
		\end{itemize}
	\end{itemize}
	
	\noindent\emph{Case 7:}
	If  $p_6 = p_5 = 0$  then we have the following system
	\begin{equation*}
	a'(t) = \tfrac{A}{1 + p_2 or}  - p_3 a,\quad
	r'(t) = \tfrac{(or)^2}{p_4 + (or)^2 } , \quad
	o'(t) = a - o.
	\end{equation*}
	
	\noindent If  $r_0 = 0$ then we find the explicit solution
	
	\begin{equation*}
	a(t) = (a_0  - \tfrac{A}{p_3} ) e^{- p_3 t} + \tfrac{A}{p_3}, \quad
	r(t) = 0,\quad
	o(t) =  (o_0  - \tfrac{A}{p_3} ) e^{- t} + (o_0  - \tfrac{A}{p_3} ) e^{- t}  \int_0^t{ e^{(1- p_3)s } ds}   + \tfrac{A}{p_3}.
	\end{equation*}
	
	\noindent If  $r_0 \neq 0$ then we approximately have
	
	\begin{equation*}
	a'(t)  \approx  -p_3\left( a - \tfrac{A}{p_3} \right) - \tfrac{A^2 p_2}{p_3} r ,\quad
	r'(t)  \approx   \tfrac{1}{p_4} \left(  \tfrac{A}{p_3} \right)^2 r^2,\quad
	o'(t) = a - o,
	\end{equation*}
	
	\noindent whence
	
	\begin{eqnarray*}
		r(t) &\approx& \tfrac{r_0}{ 1  -  \frac{r_0}{p_4} (  \frac{A}{p_3} )^2  t }  \to +\infty \text{ as }  t \to T^* := \tfrac{p_4}{r_0} \left(  \tfrac{p_3}{A} \right)^2,\\
		a(t) &\approx& a_0 e^{-p_3 t} + \tfrac{ A}{p_3} (1- e^{-p_3 t} )  -  \tfrac{A^2 p_2}{p_3} e^{-p_3 t} \int_0^t { r(s) e^{p_3 s} ds },\\
		o(t)  &\approx& o_0 e^{- t} + e^{- t} \int_0^t { a(s) e^{ s} ds }.
	\end{eqnarray*}
	
	\noindent If  $p_6 =0$ but  $ p_5 \neq 0$  then  $r(t)$ blows up in a finite time too.
	
	Also, note that if  $ p_6 = 0$ and $p_5 =  0$ then the system (\ref{eq-1})--(\ref{eq-3})
	has the fixed point $( \frac{A}{p_3},0, \frac{A}{p_3})$. The corresponding characteristic equation is
	$$
	\lambda (\lambda +1)(\lambda + p_3)  = 0,
	$$
	whence $\lambda_{i} = -1,\,-p_3,\, 0$. As a result, $( \frac{A}{p_3},0, \frac{A}{p_3})$ is non-hyperbolic fixed point.

	\begin{example}\label{ex:stable}
		Let $A=1$, $p_2 = 15$, $p_3 = 7.2$, $p_4 = 0.05$, $p_5 = 0.11$, and $p_6 = 2.9$.
		Then $r^* \approx 0.03$, $a^* = o^* \approx 0.12$,
		$\alpha_1 = 11.1 - K_2  \approx 11.07$,
		$\alpha_2 = 30.98 - 8.2 K_2 +  K_3  \approx 30.78$,
		$\alpha_3 = 20.88 - 7.2 K_2 + 2.9  K_3   \approx 20.75$,
		whence we find that $\Delta \approx 2509.05 > 0$, $\alpha_1 \alpha_2 > \alpha_3$. As a result, all characteristic roots are negative
		real numbers and the fixed point is stable node.
		
		A visual representation of this stable node can be found in Figure \ref{fig:stable}. This plot was created using the Matlab ode45 solver (\cite{Matlab}) using various starting values and the parameter values given above. The starting values were selected so that $a_0>0$, $r_0>0$ and $o_0>0$ to imitate real initial hormone levels.
		
		\begin{figure}
			\centering
			\includegraphics[width=.75\textwidth]{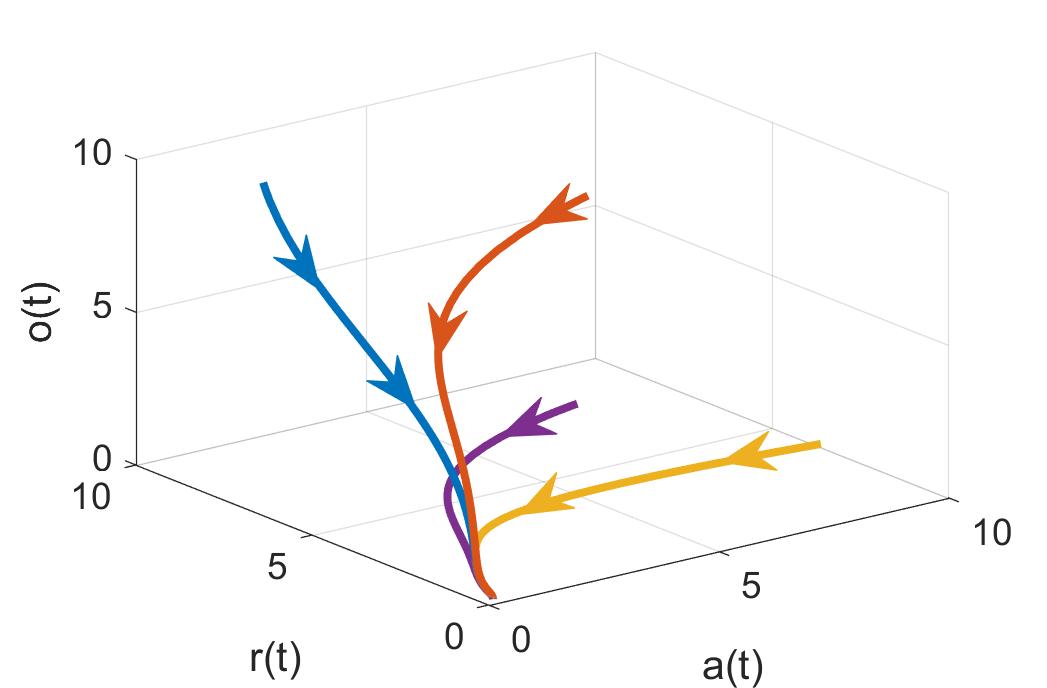}
			\caption{A plot of different trajectories illustrating the stable node associated with parameter values given in Example \ref{ex:stable}.}
			\label{fig:stable}
		\end{figure}
	\end{example}
	
	\begin{example}\label{ex:unstable}
		Let $A=0.106$, $p_2 = 0$, $p_3 = 0.222$, $p_4 = 0.464$, $p_5 = 0.094$, and $p_6 = 0.418$.
		Then $r^* \approx 0.39,0.83,1.38$ and $a^* = o^* \approx 0.47$. Using similar calculations as above according to the defined values. If $r^*\approx 0.39$ then $\alpha_1\approx1.30$, $\alpha_2\approx0.32$, $\alpha_3\approx0.01$, and $K_2\approx0.33<p_6$ which means this is a stable node. If $r^*\approx 0.83$ then $\alpha_1\approx1.18$, $\alpha_2 \approx 0.17$, and $\alpha_3\approx-0.008$, and $K_2\approx0.45>p_6$ which means this is a saddle. If $r^*\approx 1.38$ then $\alpha_1\approx1.28$, $\alpha_2\approx0.29$, $\alpha_3\approx0.01$, and $K_2\approx0.35<p_6$ which means this is a stable node.

		This is illustrated in Figure \ref{fig:unstable} using the stated above parameter values. The starting values were selected so that $a_0>0$, $r_0>0$ and $o_0>0$ to imitate real initial hormone levels.
		
		\begin{figure}
			\centering
			\includegraphics[width=.75\textwidth]{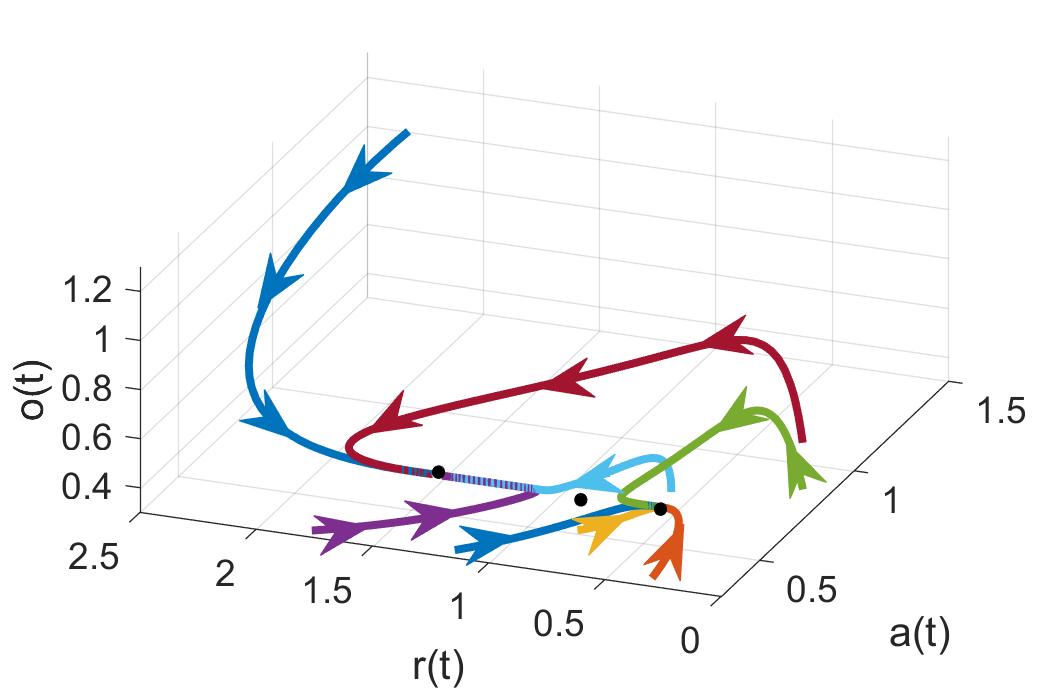}
			\caption{A plot of different trajectories illustrating the unstable saddle-node with only realistic initial conditions and the above parameter values stated in Example \ref{ex:unstable}.}
			\label{fig:unstable}
		\end{figure}
	\end{example}
	
	
	\subsection{Lyapunov stability analysis}
	In this section, we show the stability of the fixed point by using the Lyapunov function approach. We consider the system (\ref{eq-1})-(\ref{eq-3}) and denote 
	$$
	W(t) := \tfrac{1}{2} [ (a(t)-a^*)^2 + (r(t)-r^*)^2 + (o(t)-o^*)^2 + (o(t)r(t)-o^*r^* )^2],
	$$
	where $(a^*,r^*,o^*)$ is the fixed point ($a^*= o^*$).
	
	\begin{lemma}[Stability]\label{stab}
		Assume that
		$$
		A \geqslant 0,  \ p_2 \geqslant 0, \ p_3 > \tfrac{1}{2},\ p_4 \geqslant 0,  \
		p_6 >  \tfrac{1}{\min \{ p_3 - \tfrac{1}{2},  p_6, 1 \}},
		$$
		and
		$$
		0 \leqslant  p_5 <  p_6 \min \{ p_3 - \tfrac{1}{2},  p_6, 1 \} - 1.
		$$
		Then there exist $W^* > 0$, $A_0 > 0$ and $p^*_4 > 0$ such that
		\begin{equation}\label{rrr-t}
		W(t) \to 0 \text{ as } t \to +\infty
		\end{equation}
		provided
		$
		W(0) < W^*,\ 0 \leqslant A < A_0, \ p_4 > p_4^*,
		$
		hence, the fixed point $(a^*,r^*,o^*)$ is globally stable. If $ p_4 \leqslant p_4^* $ then
		there exist  $A_0 \leqslant A_1 < A_2 $ such that (\ref{rrr-t}) holds provided
		$
		W(0) < W^{*},\   A_1 < A < A_2.
		$
	\end{lemma}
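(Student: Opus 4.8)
The plan is to use $W$ as a Lyapunov functional: I will show that along solutions of (\ref{eq-1})--(\ref{eq-3}) one has $\frac{dW}{dt}\leqslant -c\,W$ on the sublevel set $\{W<W^{*}\}$ for some $c>0$, after which a standard comparison (barrier) argument yields (\ref{rrr-t}). First I would pass to the perturbation variables $u:=a-a^{*}$, $v:=r-r^{*}$, $w:=o-o^{*}$; since $a^{*}=o^{*}$ we have $a-o=u-w$ and $or-o^{*}r^{*}=o^{*}v+r^{*}w+vw=:q$. Using the fixed-point relations $\tfrac{A}{1+p_{2}o^{*}r^{*}}=p_{3}a^{*}$ and $\tfrac{(o^{*}r^{*})^{2}}{p_{4}+(o^{*}r^{*})^{2}}+p_{5}=p_{6}r^{*}$ (cf. (\ref{eq-4})--(\ref{eq-6})) the system becomes
$$
\dot u=-p_{3}u-\nu q,\qquad \dot v=-p_{6}v+\mu q,\qquad \dot w=u-w,\qquad \tfrac{d}{dt}(or)=(u-w)r+o(\mu q-p_{6}v),
$$
where $\nu:=\tfrac{A p_{2}}{(1+p_{2}or)(1+p_{2}o^{*}r^{*})}\geqslant0$ and $\mu:=\tfrac{p_{4}(or+o^{*}r^{*})}{(p_{4}+(or)^{2})(p_{4}+(o^{*}r^{*})^{2})}\geqslant0$. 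I would keep the a priori bounds $\nu\leqslant A p_{2}$, $\mu\leqslant\tfrac{or+o^{*}r^{*}}{p_{4}}\leqslant\tfrac{1}{\sqrt{p_{4}}}$ (from $\tfrac{t}{p_{4}+t^{2}}\leqslant\tfrac{1}{2\sqrt{p_{4}}}$), $a^{*}=o^{*}\leqslant A/p_{3}$, $p_{5}/p_{6}\leqslant r^{*}\leqslant(p_{5}+1)/p_{6}$, and the identity $o^{*}\mu^{*}=K_{2}$ at the fixed point ($\mu^{*}$ the value of $\mu$ there, $K_{2}$ as in Lemma~\ref{L-sss}); thus $o^{*}$, $\nu$, $K_{2}$, $\mu r^{*}$ are all small when $A$ is small and $p_{4}$ large.

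Differentiating, $\dot W=u\dot u+v\dot v+w\dot w+q\,\tfrac{d}{dt}(or)$. Substituting $q=o^{*}v+r^{*}w+vw$ everywhere and collecting, I would write $\dot W=Q(u,v,w)+\mathcal R$, with $Q$ the quadratic part: diagonal $-p_{3}u^{2}-p_{6}(1+(o^{*})^{2})v^{2}-(1+(r^{*})^{2})w^{2}$ up to terms carrying a factor $o^{*}\mu$; cross term $uw$ with coefficient $1+(r^{*})^{2}-\nu r^{*}$, $uv$ with coefficient $o^{*}(r^{*}-\nu)$, $vw$ with coefficient $\mu r^{*}(1+2(o^{*})^{2})-o^{*}r^{*}(1+p_{6})$; and $\mathcal R$ collecting all terms of order three and higher in $(u,v,w)$, produced by the $vw$ inside $q$ and by the factor $q$ multiplying $\tfrac{d}{dt}(or)$. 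The core of the proof is to make $Q$ negative definite. I would estimate every cross term by Young's inequality — using the crude split $uw\leqslant\tfrac12(u^{2}+w^{2})$ and distributing $uv$, $vw$ with small weights — after which the diagonal coefficients of the resulting form are, schematically, $-p_{3}+\tfrac12(1+(r^{*})^{2})$ for $u^{2}$, $-p_{6}$ for $v^{2}$, and $-\tfrac12(1+(r^{*})^{2})$ for $w^{2}$, each up to corrections that are small whenever $o^{*},\nu,\mu r^{*},K_{2}$ are small. Here the hypotheses enter exactly: $p_{6}>1/\min\{p_{3}-\tfrac12,p_{6},1\}$ together with $0\leqslant p_{5}<p_{6}\min\{p_{3}-\tfrac12,p_{6},1\}-1$ force $r^{*}\leqslant(p_{5}+1)/p_{6}<\min\{p_{3}-\tfrac12,p_{6},1\}$, and with $p_{3}>\tfrac12$ this gives $(r^{*})^{2}<2p_{3}-1$, so the $u^{2}$- and $w^{2}$-coefficients are strictly negative with room to spare while the $v^{2}$-coefficient is obviously negative; the residual $O(\cdot)$ quantities are then absorbed into that room by choosing $A<A_{0}$ and $p_{4}>p_{4}^{*}$. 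This yields $Q\leqslant-c(u^{2}+v^{2}+w^{2})\leqslant-2c'W$ for suitable $c,c'>0$.

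To conclude, note $W\geqslant\tfrac12(u^{2}+v^{2}+w^{2}+q^{2})$ and $q=o^{*}v+r^{*}w+vw$, so on $\{W\leqslant1\}$ each of $|u|,|v|,|w|,|q|$ is at most $C\sqrt W$, whence $|\mathcal R|\leqslant C_{1}W^{3/2}$ because every monomial of $\mathcal R$ is at least cubic in $(u,v,w)$ up to bounded factors. Therefore $\dot W\leqslant-2c'W+C_{1}W^{3/2}=-W\bigl(2c'-C_{1}\sqrt W\bigr)$, so with $W^{*}:=\min\{1,(c'/C_{1})^{2}\}$ the hypothesis $W(0)<W^{*}$ forces $W$ to be non-increasing, to remain below $W^{*}$, and to satisfy $\dot W\leqslant-c'W$; hence $W(t)\to0$, which is (\ref{rrr-t}). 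For the last assertion, if $p_{4}\leqslant p_{4}^{*}$ the bound $\mu r^{*}\leqslant r^{*}/\sqrt{p_{4}}$ no longer makes that coefficient small, so I would instead keep track of the joint dependence on $A$: by Lemma~\ref{L-sss} the root $r^{*}=r^{*}(A)$ runs monotonically from $p_{5}/p_{6}$ to $(p_{5}+1)/p_{6}$ as $A$ runs over $[0,+\infty)$, with $o^{*}r^{*}=\sqrt{p_{4}(p_{6}r^{*}-p_{5})/(1+p_{5}-p_{6}r^{*})}$, so $o^{*}$, $\nu$, $K_{2}$, $\mu$ are continuous in $A$ and one checks that the negative-definiteness inequalities of the previous paragraph hold on a non-empty interval $(A_{1},A_{2})$ with $A_{0}\leqslant A_{1}<A_{2}$; the barrier argument then applies verbatim there.

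The step I expect to be the main obstacle is the one in the second paragraph: organizing the several cross and diagonal contributions to $\dot W$ and verifying that, after the Young splittings, they assemble into a negative-definite form precisely under the stated parameter inequalities — the borderline being $p_{3}>\tfrac12$, which survives only because the conditions on $p_{5}$ and $p_{6}$ keep $(r^{*})^{2}<2p_{3}-1$. Getting the interplay between these algebraic conditions and the smallness required of $A$ and $1/p_{4}$ right, and, for the final claim, actually pinning down the admissible window $(A_{1},A_{2})$, is the technical heart; the cubic estimate on $\mathcal R$ and the comparison argument are then routine.
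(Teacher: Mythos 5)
Your plan is essentially the paper's own proof: the same Lyapunov functional $W$, the same reduction to a differential inequality of the form $\dot W\leqslant -\alpha W+(\text{higher powers of }W)$ with $\alpha>0$ secured by the hypotheses on $p_3,p_5,p_6$ together with smallness of $A$ and largeness of $p_4$ (the paper keeps an explicit $\gamma W^2$ term and factors the resulting cubic in $W^{1/2}$ rather than absorbing the quartic remainder on $\{W\leqslant 1\}$), and the same barrier argument producing the threshold $W^*$. The only place you are materially vaguer than the paper is the case $p_4\leqslant p_4^*$: the paper exhibits the window $(A_1,A_2)$ concretely by bounding $o^*r^*$ from below by $\tfrac{1}{2p_2}\bigl[\sqrt{1+\tfrac{4p_2p_5A}{p_6}}-1\bigr]$ and showing that the resulting function $F(A)$ has a unique interior minimum, whereas you only appeal to continuity in $A$ without explaining why the admissible set is a non-empty interval.
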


	\begin{proof}[Proof of Lemma \ref{stab}]
		
		Using  the system (\ref{eq-1})--(\ref{eq-3}), we have
		\begin{multline*}
		\tfrac{d}{dt}W(t) -  (o r - o^*r^*) (o(t)r(t))' = 
		-  p_3 (a-a^*)^{2 } -   p_6 (r-r^*)^{2} -  (o-o^*)^{2 } + \\
		(a-a^*)  \bigl[ \tfrac{A}{1+p_2 or} -   \tfrac{A}{1+p_2 o^* r^* } \bigr]
		+(r-r^*)  \bigl[ \tfrac{p_4}{p_4 + ( o^*r^*)^2 } - \tfrac{p_4}{p_4 + ( or)^2 } \bigr ] + 
		(a - o^*) (o-o^*) .
		\end{multline*}
		As
		$$
		2 (a - o^*) (o-o^*)  \leqslant (a - a^*)^2 +  (o-o^*)^{2}, \quad
		\bigl | \tfrac{1}{1+p_2 or} -   \tfrac{1}{1+p_2 o^* r^* } \bigr | \leqslant p_2 | or - o^* r^* |,
		$$
		\begin{multline*}
		(o(t)r(t))' = r (a - o) + o [ - \tfrac{p_4}{p_4 + ( or)^2 } + 1 + p_5 - p_6 r]  =  
		(r - r^*)(a - a^*) + r^*(a-a^*) +\\ a^* (r - r^*) - (o r - o^* r^*) - 
		p_6 (r - r^*)(o - o^*) - p_6 o^*(r - r^*) +  (o - o^*) \bigl[ \tfrac{p_4}{p_4 + ( o^*r^*)^2 } - \tfrac{p_4}{p_4 + ( or)^2 } \bigr ]
		+ \\
		o^* \bigl[ \tfrac{p_4}{p_4 + ( o^*r^*)^2 } - \tfrac{p_4}{p_4 + ( or)^2 } \bigr ],
		\end{multline*}
		
		$$
		\bigl | \tfrac{p_4}{p_4 + ( o^*r^*)^2 } - \tfrac{p_4}{ p_4 + ( or)^2 } \bigr | \leqslant
		\tfrac{ | or - o^* r^* | \cdot | or + o^* r^* |}{ p_4 + ( o^*r^*)^2 }, \quad
		| or + o^* r^* | \leqslant | or - o^* r^* | + 2 o^* r^*,
		$$
		then
		\begin{equation*}
		\tfrac{d}{dt}W(t) \leqslant - \alpha  W(t) + \beta W^{ \frac{3}{2}}(t) + \gamma   W^{2}(t),
		\end{equation*}
		i.\,e.
		\begin{equation}\label{rrr-1}
		\tfrac{d}{dt}W(t) \leqslant \gamma W(t)
		\biggl[  W^{ \frac{1}{2}}(t) + \tfrac{ \beta - \sqrt{\beta^2 + 4 \alpha \gamma}}{2\gamma} \biggr]
		\biggl[  W^{ \frac{1}{2}}(t) + \tfrac{ \beta + \sqrt{\beta^2 + 4 \alpha \gamma}}{2\gamma} \biggr],
		\end{equation}
		where
		$$
		\alpha = 2 [ \min \{ p_3 - \tfrac{1}{2},  p_6, 1 \} - A p_2 - r^* - (p_6 + 1) a^* -
		\tfrac{ 4 o^* r^* }{ p_4 + ( o^*r^*)^2 } ]  > 0
		$$
		$$
		\beta = 2^{\frac{3}{2}} [ p_6 + 1  + \tfrac{ 3  o^* r^* }{ p_4 + ( o^*r^*)^2 }]
		\leqslant  2^{\frac{3}{2}} [ p_6 + 1  + 3   \min \bigl \{ \tfrac{1}{2 p_4^{\frac{1}{2}}},
		\tfrac{2 p_2 }{ \sqrt{ 1 + \frac{4 p_2 p_5 A}{p_6} } - 1 } \bigr \} ],
		$$
		$$
		\gamma = \tfrac{ 4 o^* r^* }{ p_4 + ( o^*r^*)^2 } \leqslant 4 \min \bigl \{ \tfrac{1}{2 p_4^{\frac{1}{2}}},
		\tfrac{2 p_2 }{ \sqrt{ 1 + \frac{4 p_2 p_5 A}{p_6} } - 1 } \bigr \}  ,
		$$
		provided
		\begin{equation}\label{rrr-0}
		0 < r^* + (p_6 + 1) a^* + \tfrac{ 4 o^* r^* }{ p_4 + ( o^*r^*)^2 } < \min \{ p_3 - \tfrac{1}{2},  p_6, 1 \} - A p_2 .
		\end{equation}
		As $0 \leqslant a^* = o^* \leqslant \frac{A}{p_3}$, $\frac{p_5}{p_6} \leqslant r^* \leqslant \frac{p_5 +1}{p_6} $
		and $o^* r^*  \geqslant \frac{1}{2p_2}  [ \sqrt{ 1 + \frac{4 p_2 p_5 A}{p_6} } - 1  ]$
		then by (\ref{rrr-0}) we get
		$$
		\tfrac{p_6 + 1 + p_2 p_3}{p_3} A + \min \bigl \{ \tfrac{2}{p_4^{\frac{1}{2}}},
		\tfrac{8 p_2 }{ \sqrt{ 1 + \frac{4 p_2 p_5 A}{p_6} } - 1 } \bigr \} < B:= \min \{ p_3 - \tfrac{1}{2},  p_6, 1 \} -
		\tfrac{p_5 + 1}{p_6}.
		$$
		Hence,
		$$
		\tfrac{p_6 + 1 + p_2 p_3}{p_3} A + \tfrac{2}{p_4^{\frac{1}{2}}} < B \text{ and } A \leqslant \tfrac{2 p_6 p_4^{\frac{1}{2}} }{p_5} (1+ 2 p_2 p_4^{\frac{1}{2}} ),
		$$
		whence
		$$
		0 \leqslant A < A_0 := \min \{ \tfrac{2 p_6 p_4^{\frac{1}{2}} }{p_5} (1+ 2 p_2 p_4^{\frac{1}{2}} ),
		\tfrac{p_3}{p_6 + 1 + p_2 p_3}( B - \tfrac{2}{p_4^{\frac{1}{2}}} ) \},
		$$
		or
		$$
		F(A) := \tfrac{p_6 + 1 + p_2 p_3}{p_3} A + \tfrac{8 p_2 }{ \sqrt{ 1 + \frac{4 p_2 p_5 A}{p_6} } - 1 } < B \text{ and } A > \tfrac{2 p_6 p_4^{\frac{1}{2}} }{p_5} (1+ 2 p_2 p_4^{\frac{1}{2}} ).
		$$
		As the function $F(A)$ has a unique minimum for positive $A$, denote by $A_{\min}$,
		then there exist $0< A_1 < A_{\min} < A_2 $ such that $F(A) < B$ provided
		$F(A_{\min}) < B$.

		So, if $W(0) <  [\frac{ \sqrt{\beta^2 + 4 \alpha \gamma}- \beta}{2\gamma} ]^2$ then by (\ref{rrr-1}) we deduce that
		$$
		W(t) \to 0 \text{ as } t \to +\infty.
		$$
	\end{proof}
	
	
	\section{Analysis of the model with time
		delay}\label{sec-3}
	
	\subsection{Stability analysis with respect to time delay}
	
	Note the fixed point $(a^*,r^*,o^*)$ for (\ref{eq-1})--(\ref{eq-3}) coincides with the one for  (\ref{a-1})--(\ref{a-3}).
	Let us denote by
	$$
	J_{\tau} := \left(
	\begin{array}{ccc}
	\frac{\partial f_1}{\partial a_{\tau}} &  \frac{\partial f_1}{\partial r_{\tau}} &  \frac{\partial f_1}{\partial o_{\tau}} \\
	\frac{\partial f_2}{\partial a_{\tau}} &  \frac{\partial f_2}{\partial r_{\tau}} &  \frac{\partial f_2}{\partial o_{\tau}} \\
	\frac{\partial f_3}{\partial a_{\tau}} &  \frac{\partial f_3}{\partial r_{\tau}} &  \frac{\partial f_3}{\partial o_{\tau}}\\
	\end{array}
	\right),
	$$
	where $a_{\tau} = a(t-\tau)$, $r_{\tau} = r(t-\tau)$, and $o_{\tau} = o(t-\tau)$.
	Then $J_{\tau}$ at the point $(a^*,r^*,a^*)$ is equal
	$$
	J^*_{\tau} := \left(
	\begin{array}{ccc}
	0 &  0 &  0 \\
	0 &  0 &  0 \\
	1 &  0 &  0\\
	\end{array}
	\right).
	$$
	Now we will look for eigenvalues for the matrix $J^* + e^{-\lambda \tau} J^*_{\tau}$. So,
	$$
	|J^* + e^{-\lambda \tau} J^*_{\tau} - \lambda I | = \left |
	\begin{array}{ccc}
	-p_3 -\lambda & -K_1  &-  K_3\\
	0 & -p_6 + K_2 - \lambda &   K_4\\
	1+e^{-\lambda \tau}  & 0 & -1-\lambda \\
	\end{array}
	\right | =0,
	$$
	whence we obtain the characteristic equation: 
	$$(\lambda+1)(\lambda+p_3)(\lambda+p_6-K_2)=-(1+e^{-\lambda\tau})K_3(\lambda+p_6)$$
	
	Time delays are known to affect the stability of a fixed point. They can induce stability switches in which the zeros of the characteristic equation may cross the imaginary axis as the delay, $\tau$, increases. Looking at the characteristic equation as a function of $\tau$, and analyzing the location of the roots and the direction of motion as they cross the imaginary axis (see \cite{Cook}). Destabilization will happen at critical values $\tau_c$ which is when there is a pair of purely imaginary characteristic values. Following the ideas of papers \cite{Cook} and \cite{Barresi}, let's rewrite the characteristic equation as 
	\begin{eqnarray}\label{eq:98}
	C(\lambda)&:=&(\lambda+1)(\lambda+p_3)(\lambda+p_6-K_2)+(1+e^{-\lambda \tau})K_3(\lambda+p_6)\nonumber\\ 
	&=&((\lambda+1)(\lambda+p_3)(\lambda+p_6-K_2)+K_3(\lambda+p_6))+e^{-\lambda \tau}K_3(\lambda+p_6)\nonumber\\
	&=&P(\lambda)+Q(\lambda)e^{-\lambda\tau}=0
	\end{eqnarray}  
	Then define
	\begin{eqnarray}\label{eq:101}
	F(y)&=&|P(iy)|^2-|Q(iy)|^2 \nonumber\\
	&=&y^6+(p_6^2-2K_2p_6+p_3^2-2K_3+K_2^2+1)y^4+(K_2^2-2K_2K_3+2K_3p_3\nonumber\\
	&&-2K_2K_3p_3+p_3^2+K_2^2 p_3^2-2K_2p_6+2K_2K_3p_6-2K_2p_3^2p_6+p_6^2-2K_3p_6^2\nonumber\\
	&&+p_3^2p_6^2)y^2+(K_2^2p_3^2-2K_2K_3p_3p_6-2K_2p_3^2p_6+2K_3p_3p_6^2+p_3^2p_6^2).
	\end{eqnarray}
	We want to use Theorem \ref{delay} from the Appendix, so we need to check the following conditions:
	\begin{enumerate}
		\item $P(\lambda)=(\lambda+1)(\lambda+p_3)(\lambda+p_6-K_2)+K_3(\lambda+p_6)$ and $Q(\lambda)=K_3(\lambda+p_6)$ have no common imaginary zeros since each $p_i$ are real values. 
		\item It is quick to see that $\overline{P(i\lambda)}=P(i\lambda)$ and $\overline{Q(i\lambda)}=Q(i\lambda)$ for real $\lambda$.
		\item $P(0)+Q(0)=p_3(p_6-K_2) + 2 K_3 p_6 \neq0$ so this is an important restriction in order to use the Theorem \ref{delay}.
		\item Referring back to (\ref{eq-8}), we see that there are at most 3 roots of (\ref{eq:98}) if $\tau=0$.
		\item (\ref{eq:101}) has at most 6 real zeros for real $y$.
	\end{enumerate}
	Therefore, by Theorem \ref{delay} from the Appendix, if $F(y)$ has no positive roots, the system is stable for all $\tau\geqslant0$. If $F(y)$ has a simple positive root $y_0$, then there exists a pair of purely imaginary roots $\pm iv_0$ such that $v_0=\sqrt{y_0}$. For this $v_0$, there is a countable sequence of $\{\tau_0^n\}$ of delays for which stability switches can occur. Also, there exists a positive $\tau_c$ such that the system is unstable for all $\tau>\tau_c$. Investigating this further, let $x=y^2$
	\begin{eqnarray}\label{eq:99}
	F(x)&=&x^3+b_1x^2+b_2x+b_3,
	\end{eqnarray}
	where $b_1=p_6^2-2K_2p_6+p_3^2-2K_3+K_2^2+1$, $b_2=K_2^2+-2K_2K_3+2K_3p_3-2K_2K_3p_3+p_3^2+K_2^2p_3^2-2K_2p_6+2K_2K_3p_6-2K_2p_3^2p_6+p_6^2-2K_3p_6^2+p_3^2p_6^2$, and $b_3=K_2^2p_3^2-2K_2K_3p_3p_6-2K_2p_3^2p_6+2K_3p_3p_6^2+p_3^2p_6^2$. Note that 
	$$F'(x)=3x^2+2b_1x+b_2$$
	and
	\begin{equation}
	\label{d0}
	\Delta_0=b_1^2-3b_2.
	\end{equation}
	
	Now analyzing the roots of (\ref{eq:99}),
	\begin{itemize}
		\item If $\Delta_0\leqslant0$, then $F'(0)\geqslant0$ and $F(x)$ is monotonically non-decreasing. Further,
		\begin{itemize}
			\item if $F(0)>0$, then $F$ has no positive roots and all the roots of the characteristic will remain to the left of the imaginary axis for all $\tau>0$.
			\item if $F(0)<0$, then since $\lim_{x\rightarrow\infty} F(x)=\infty$, there is at least one positive root of $F$ and thus the roots of the characteristic equation can cross the imaginary axis.
		\end{itemize}
		\item If $\Delta_0>0$ then $F$ has critical points
		\begin{eqnarray*}
			x_{c_1}=\frac{-b_1+\sqrt{\Delta_0}}{3}&& x_{c_2}=\frac{-b_1-\sqrt{\Delta_0}}{3}
		\end{eqnarray*}
		and if $x_{c_1}>0$ and $F(x_{c_1})<0$, then $F$ has positive roots (see  \cite{Cook}).
	\end{itemize}
	
	Stability switches are possible for each positive simple root $x_j$ of (\ref{eq:99}) and the cross is from left to right if $F'(v_0)>0$, and from right to left if $F'(v_0)<0$ according to Theorem 1 (see \cite{Cook}). Now let's analyze the characteristic quasi-polynomial (\ref{eq:98}) for $\lambda=iv$:
	\begin{eqnarray}\label{eq:100}
	C(iv)=A_1-A_2\cos(v\tau)-A_3\sin(v\tau)+ i[A_4
	-A_3\cos(v\tau)+A_2\sin(v\tau)]=0, \nonumber
	\end{eqnarray}
	where 
	\begin{eqnarray*}
		A_1(v)=p_3p_6-K_2p_3+K_3p_6-v^2(p_6+p_3-K_2+1), && A_2=-K_3p_6.\\
		A_4(v)=v(p_3-K_2-K_2p_3+p_6+p_3p_6+K_3)-v^3, && A_3(v)=-K_3v.
	\end{eqnarray*}
	So $x_j$ ($j=1,2,3$) is a positive root of $F(x)=0$ and $v_j=\sqrt{x_j}$. Then $v_j$ satisfies (\ref{eq:100}) if its a solution to the system 
	\begin{eqnarray*}
		\left\{\begin{array}{l}A_1(v)-A_2\cos(v\tau)-A_3(v)\sin(v\tau)=0,\\A_4(v)-A_3(v)\cos(v\tau)+A_2\sin(v\tau)=0. \end{array} \right.
	\end{eqnarray*}
	This yields
	\begin{eqnarray*}
		\sin(v \tau)=\tfrac{A_1(v)A_3(v)-A_2A_4(v)}{A_2^2+A_3^2(v)},&& 
		\cos(v\tau)=\tfrac{ A_1(v) A_2 + A_3(v) A_4(v)}{ A_2^2 + A_3^2(v) },
	\end{eqnarray*}
	provided  $\max \{  |A_1(v) A_3(v) - A_2 A_4(v)|,  |A_1(v) A_2 - A_3(v) A_4(v)|  \}  \leqslant   A_2^2 + A_3^2(v)$,

	Therefore, for every positive root $v_j$, it yields the following sequence of delays $\{\tau_j^n\}$ for which there are pure imaginary roots (\ref{eq:98}):
	\begin{eqnarray}\label{eq:103}
	\tau_j^n=\tfrac{1}{v_j}\left\{\arctan\left(\tfrac{A_1(v_j)A_3(v_j)-A_2A_4(v_j)}{A_1(v_j)A_2+A_3(v_j)A_4(v_j)} + \pi \, n \right) \right\} && \text{for } n=0,1,2, \ldots
	\end{eqnarray}

	\begin{figure}
		\centering
		\begin{subfigure}[b]{0.45\textwidth}
			\includegraphics[width=\textwidth]{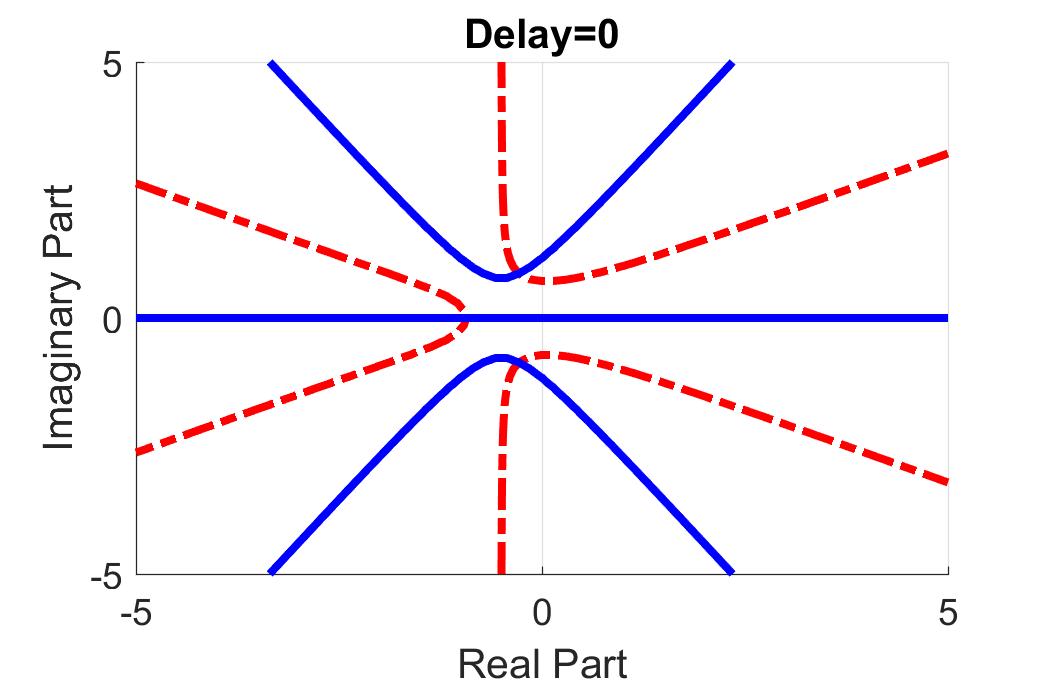}
		\end{subfigure}
		\quad
		\begin{subfigure}[b]{0.45\textwidth}
			\includegraphics[width=\textwidth]{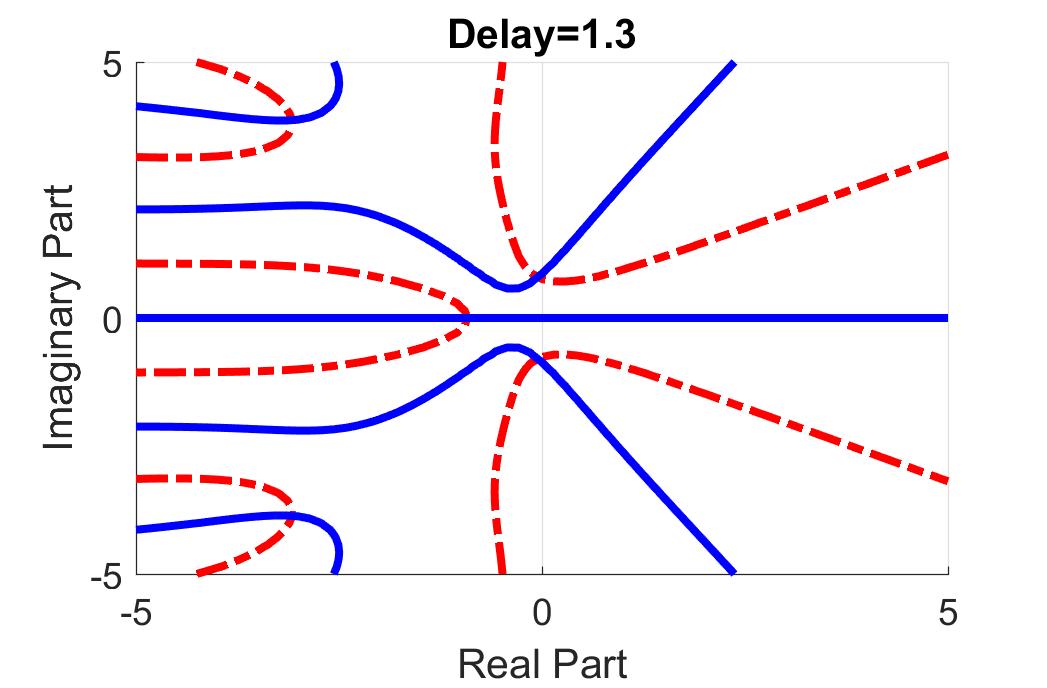}
		\end{subfigure}
		\vskip\baselineskip
		\begin{subfigure}[b]{0.45\textwidth}
			\includegraphics[width=\textwidth]{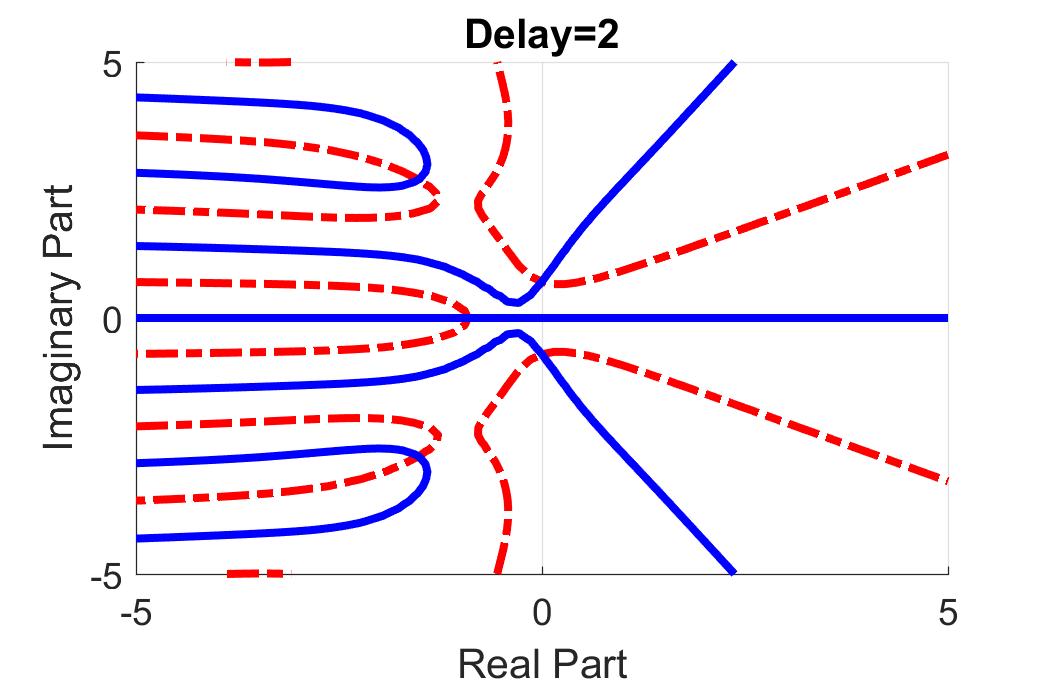}
		\end{subfigure}
		\quad
		\begin{subfigure}[b]{0.45\textwidth}
			\includegraphics[width=\textwidth]{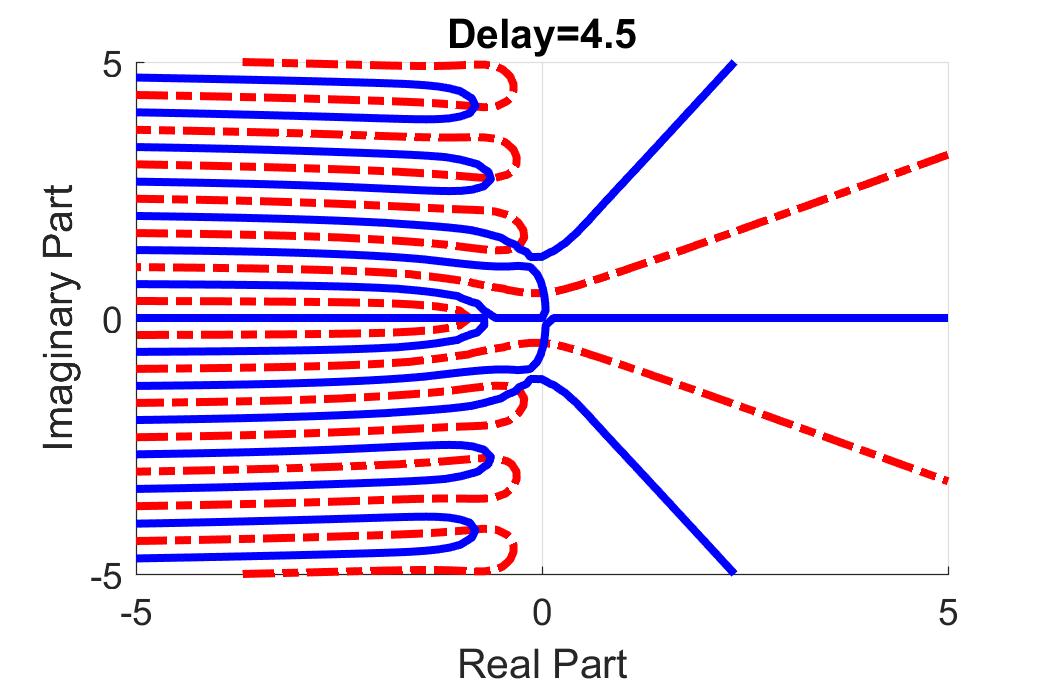}
		\end{subfigure}
		\vskip\baselineskip
		\begin{subfigure}[b]{0.45\textwidth}
			\includegraphics[width=\textwidth]{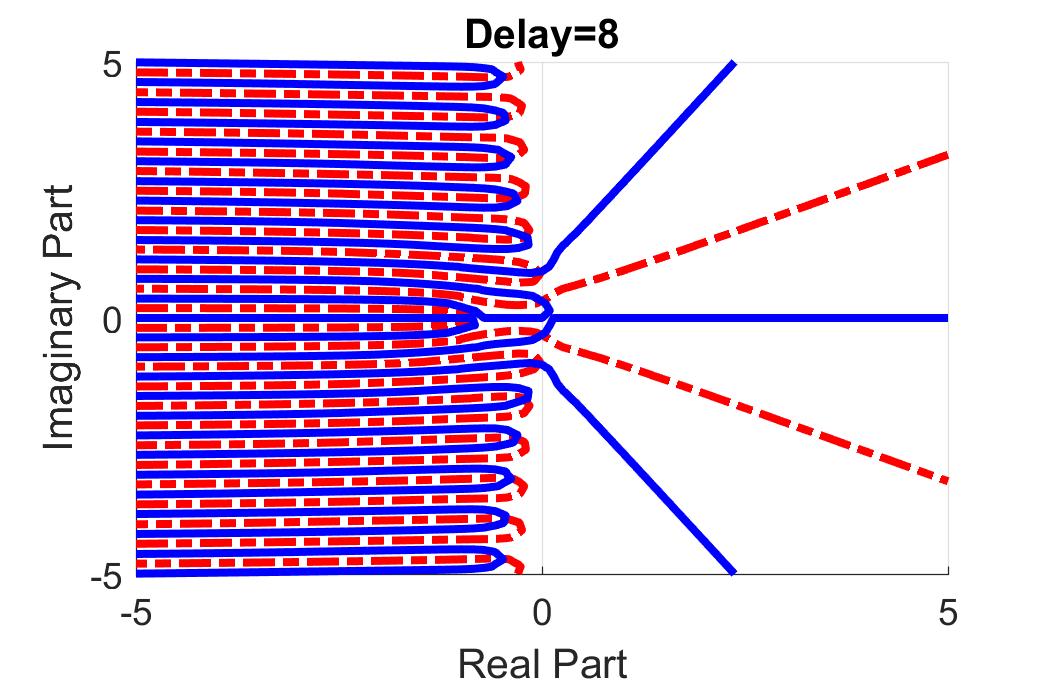}
		\end{subfigure}
		\quad
		\begin{subfigure}[b]{0.45\textwidth}
			\includegraphics[width=\textwidth]{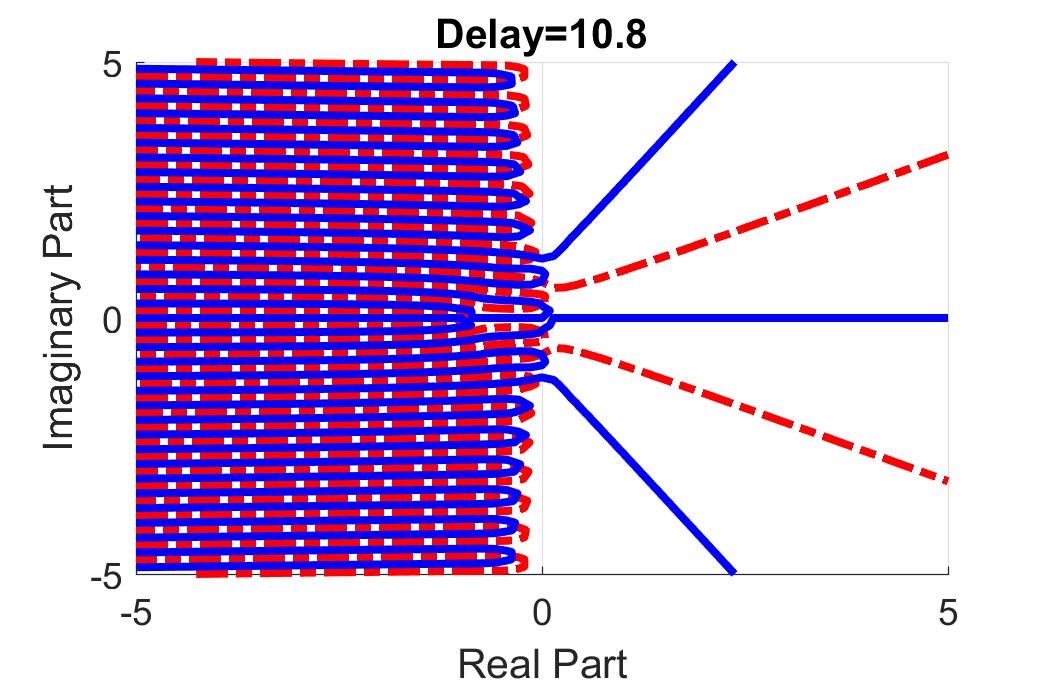}
		\end{subfigure}
		\caption{Contour plots for different values of delay $\tau$ showing stability switches.}\label{fig:delay2}
	\end{figure}
	As a result the following statement holds.
	\begin{lemma} The system (\ref{eq-1})--(\ref{eq-3}) with delay and $p_3(p_6-K_2)+ 2 K_3 p_6 \neq0$ is stable for all $\tau\geqslant 0$ if $F(0)>0$ and $\Delta_0 \leqslant 0$ (where $\Delta_0$ is from \ref{d0}). The system has stability switches at some $\{\tau_j^n\}$ for every positive root $v_j$ of (\ref{eq:98}). Furthermore, if $A=0$, $p_2=0$ or $p_5=p_6=0$ then the delay has no affect on the stability of the system.
	\end{lemma}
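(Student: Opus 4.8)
The plan is to obtain all three assertions from Theorem~\ref{delay} of the Appendix together with the computations already performed on the quasi-polynomial (\ref{eq:98}), whose five hypotheses are precisely items (1)--(5) listed above ($P$, $Q$ share no imaginary zero; $\overline{P(iy)}=P(iy)$ and $\overline{Q(iy)}=Q(iy)$; at most three roots at $\tau=0$; at most six real zeros of $F$; and $P(0)+Q(0)=p_3(p_6-K_2)+2K_3p_6\neq0$, which is the standing hypothesis of the lemma). Since the $\tau=0$ configuration is asymptotically stable --- its characteristic polynomial $P(\lambda)+Q(\lambda)$ being a cubic of the Routh--Hurwitz type treated in Lemma~\ref{L-sss} --- it remains only to control imaginary-axis crossings as $\tau$ grows, and for the first assertion one observes that a root of $C(\lambda)$ can reach $\lambda=iy$, $y>0$, only if $|P(iy)|=|Q(iy)|$, i.e. $F(x)=0$ with $x=y^2>0$; writing $F(x)=x^3+b_1x^2+b_2x+b_3$, the hypothesis gives $F(0)=b_3>0$, while $\Delta_0=b_1^2-3b_2\leqslant0$ forces the discriminant $4(b_1^2-3b_2)=4\Delta_0$ of $F'(x)=3x^2+2b_1x+b_2$ to be nonpositive, so $F'\geqslant0$, $F$ is nondecreasing on $[0,\infty)$, hence $F(x)\geqslant F(0)>0$ there, $F$ has no positive root, no characteristic root crosses the imaginary axis, and Theorem~\ref{delay} gives asymptotic stability for every $\tau\geqslant0$.

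For the second assertion, I would take a positive simple root $x_j$ of $F$ and set $v_j=\sqrt{x_j}$, so that $|P(iv_j)|=|Q(iv_j)|$. Substituting $\lambda=iv_j$ into (\ref{eq:98}) and separating real and imaginary parts as in (\ref{eq:100}) determines $\cos(v_j\tau)$ and $\sin(v_j\tau)$ uniquely, the two equations being consistent exactly when $\max\{|A_1(v_j)A_3(v_j)-A_2A_4(v_j)|,\,|A_1(v_j)A_2+A_3(v_j)A_4(v_j)|\}\leqslant A_2^2+A_3^2(v_j)$; solving for $\tau$ produces the sequence $\{\tau_j^n\}$ of (\ref{eq:103}), at each member of which $\pm iv_j$ is a root of (\ref{eq:98}). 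Because $x_j$ is simple, $F'(x_j)\neq0$, so by Theorem~1 of \cite{Cook} the transversality condition holds with $\operatorname{sgn}\left.\dfrac{d(\operatorname{Re}\lambda)}{d\tau}\right|_{\tau_j^n}=\operatorname{sgn}F'(x_j)$; the crossing is transversal, the system undergoes a genuine stability switch at each $\tau_j^n$, and there is a least $\tau_c$ beyond which the fixed point is unstable.

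For the third assertion I would show that in each degenerate case $K_3=0$, so the delay term drops out of (\ref{eq:98}). From the closed form $K_3=\dfrac{p_2p_3\sqrt{p_4(p_6r^*-p_5)}}{\sqrt{1+p_5-p_6r^*}+p_2\sqrt{p_4(p_6r^*-p_5)}}$: if $p_2=0$ the numerator vanishes; if $A=0$ the fixed point is $\bigl(0,\tfrac{p_5}{p_6},0\bigr)$ (Case~1), so $p_6r^*-p_5=0$; if $p_5=p_6=0$ the fixed point has $r^*=0$ (Cases~6--7), so again $p_6r^*-p_5=0$. In every case $K_3=0$, whence $Q(\lambda)=K_3(\lambda+p_6)\equiv0$ and $C(\lambda)=P(\lambda)$ contains no factor $e^{-\lambda\tau}$; thus the spectrum, and with it the stability type of the fixed point, is independent of $\tau$.

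The hard part will be the second assertion: one must check that Theorem~\ref{delay} (equivalently the crossing-direction lemma of \cite{Cook}) really applies --- that the consistency inequality above holds at each relevant $v_j$, so that the branch in (\ref{eq:103}) is well defined, and that the transversality determinant does not vanish. The monotonicity estimate behind the first assertion and the vanishing of $K_3$ behind the third are then routine.
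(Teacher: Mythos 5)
Your proposal is correct and follows essentially the same route as the paper: the paper states this lemma as a summary of the immediately preceding analysis (verification of conditions (i)--(v) of Theorem~\ref{delay}, monotonicity of $F$ when $\Delta_0\leqslant 0$ together with $F(0)>0$, the derivation of the sequence $\{\tau_j^n\}$ from the $\sin(v\tau)$, $\cos(v\tau)$ system, and the degenerate Cases 1, 2 and 7 where the characteristic equation loses its $\tau$-dependence), and your reconstruction matches each of these steps. Your observation that all three degenerate cases reduce to $K_3=0$, so that $Q\equiv 0$ and the exponential term drops out of (\ref{eq:98}), is a slightly more unified way of stating what the paper verifies case by case.
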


	\begin{example}\label{ex:delay}
		This example illustrates the dynamics of eigenvalues with respect to the time delay for the following set of parameters $p_3 = 0.41$, $p_6 = 0.91$, $K_2=0.81$, and $K_3=0.41$ in the equation (\ref{eq:98}). Taking the real and imaginary parts, we rewrite the equation as a system
		\begin{eqnarray*}
			\left\{\begin{array}{rl}-K_2p_3&+K_3p_6+p_3p_6-K_2x+K_3x+p_3x-K_2p_3x+p_6x+p_3p_6x+x^2\\
				&-K_2x^2+p_3x^2+p_6x^2+x^3-y^2+K_2y^2-p_3y^2-p_6y^2-3xy^2\\
				&+e^{-\tau x}K_3p_6\cos(\tau y)+e^{-\tau x}K_3x\cos(\tau y)+e^{-\tau x}K_3y\sin(\tau y)=0,\\
				-K_2y&+K_3y+p_3y-K_2p_3y+p_6y+p_3p_6y+2xy-2K_2xy+2p_3xy\\
				&+2p_6xy+3x^2y-y^3+e^{-\tau x}K_3y\cos(\tau y)-e^{-\tau x}K_3p_6\sin(\tau y)\\
				&-e^{-\tau x}K_3x\sin(\tau y)=0.
			\end{array}\right.
		\end{eqnarray*}
		The red lines in Figure \ref{fig:delay2} represent the solution curves for the first equation and the blue lines in Figure \ref{fig:delay2} represent the solution curves for the second equation for different values of delay $\tau$. The eigenvalues, which are roots of (\ref{eq:98}), correspond to intersections between the red and blue lines. 
		
		When there is no delay, i.e. $\tau=0$, we only have three eigenvalues $\lambda \approx-0.9, -0.2\pm0.8i$ (see Figure \ref{fig:delay2}). When delay is is non-zero, countably many eigenvalues originate from $-\infty$ and move toward the imaginary axis as $\tau$ increases (see Figure \ref{fig:delay2}). The eigenvalues can cross the imaginary axis only at the points $y_1\approx\pm0.7$ and $y_2\approx\pm 0.25$ which are real roots of the equation (\ref{eq:101}) (see Figure \ref{fig:delay1}). The density of complex eigenvalues around these crossing points $y_1,y_2$ is increasing as the $\tau$ gets larger (see Figure \ref{fig:delay2}). 
		
		When the delay $\tau <\tau^*\approx2$ (where $\tau^*$ is a critical value found as a solution of (\ref{eq:103}) with $v_1=\sqrt{|y_1|}$) all eigenvalues are stable. The first stability switch happens at $\tau^*\approx2$ when two complex conjugate eigenvalues cross the imaginary axis at $y_1\approx\pm0.7$ changing the sign of the real part from negative to positive. At a later time $\tau^*\approx11$ (where this $\tau^*$ is a critical value found as a solution of (\ref{eq:103}) with $v_2=\sqrt{|y_2|}$) this complex pair  will cross the imaginary axis back changing the sign of the real part from positive to negative (see Figure \ref{fig:delay1}). 
		
		Solving (\ref{eq:103}) and taking into account the periodicity of the arctangent function one can obtained the infinite sequences of delays associated with $v_1$ and another infinite sequence associated with $v_2$ at which stability switches may happen. At time delays associated with $v_1$ a complex conjugate pair of eigenvalues may cross the imaginary axis from left to right and for time delays associated with $v_2$ the pair may cross the imaginary axis from right to left. If the derivative of $F(y)$ (see (\ref{eq:98})) does not change sign at the corresponding $\tau^*$ from either of the two sequences above, then the crossing of the imaginary axis does not happen. 
	\end{example}
	
	\begin{figure}
		\centering
		\begin{subfigure}[b]{0.45\textwidth}
			\includegraphics[width=\textwidth]{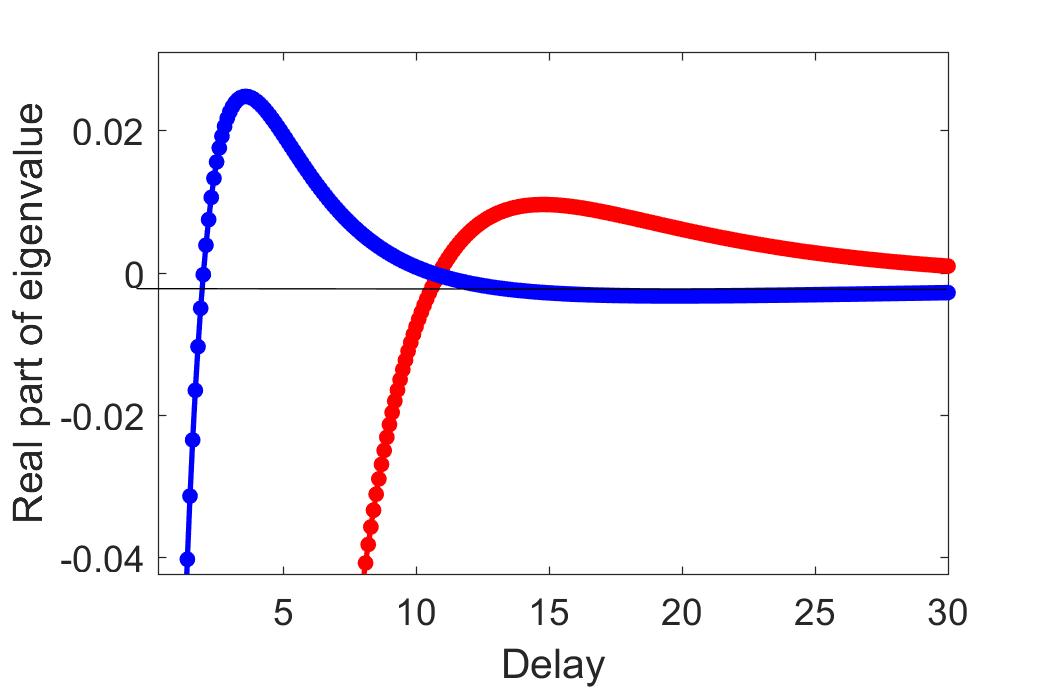}
		\end{subfigure}
		\hfill
		\begin{subfigure}[b]{0.45\textwidth}
			\includegraphics[width=\textwidth]{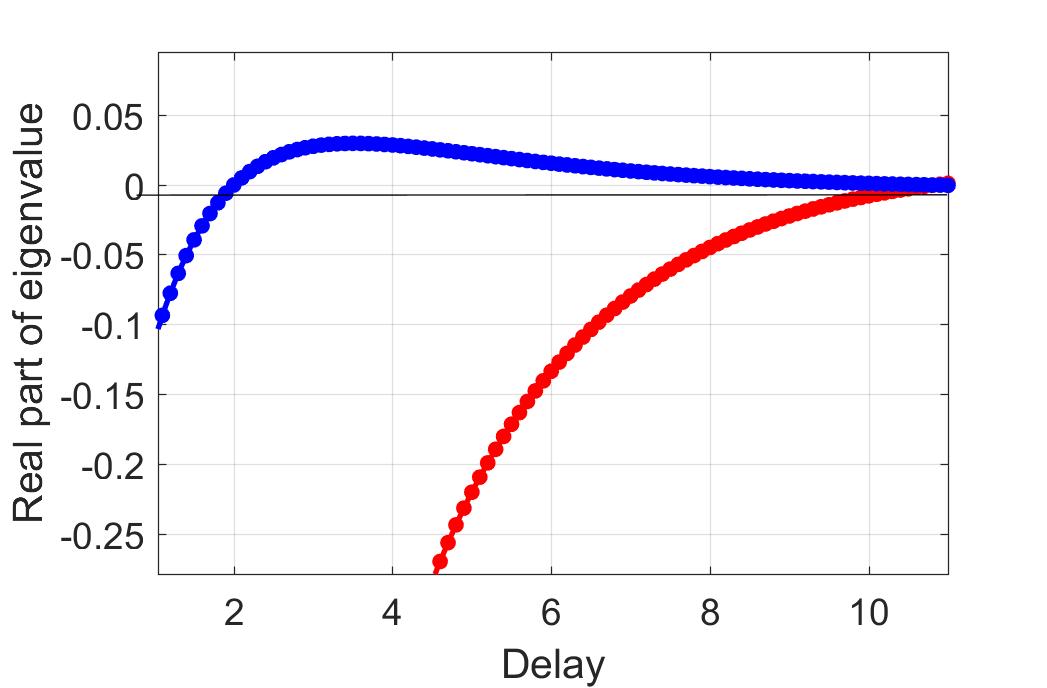}
		\end{subfigure}
		\vskip\baselineskip
		\begin{subfigure}[b]{0.45\textwidth}
			\includegraphics[width=\textwidth]{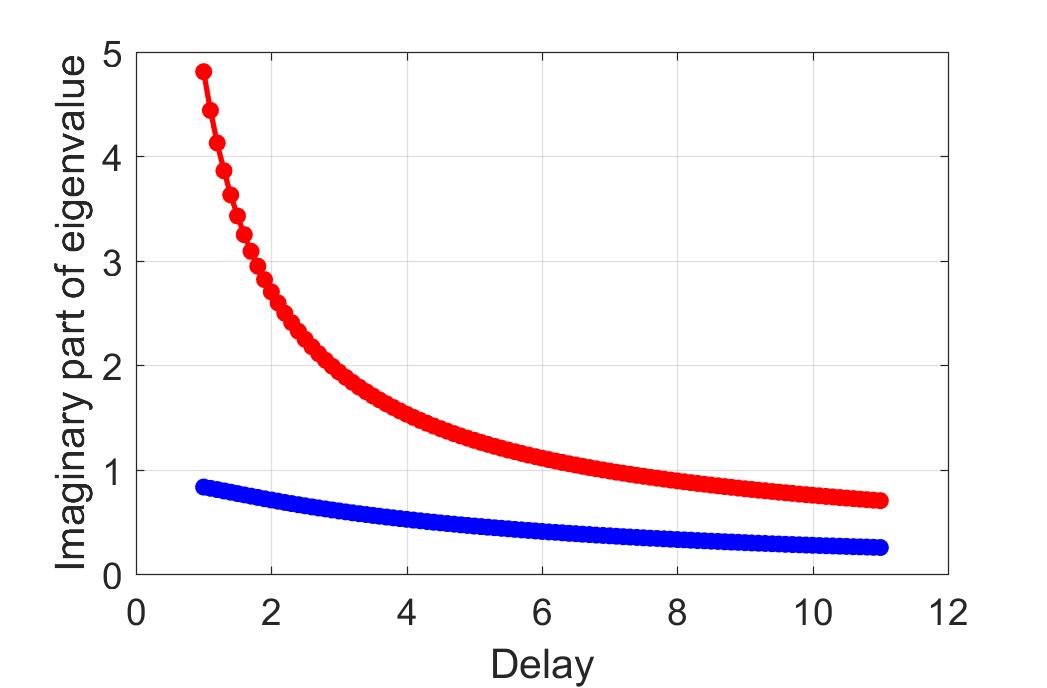}
		\end{subfigure}
		\quad
		\begin{subfigure}[b]{0.45\textwidth}
			\includegraphics[width=\textwidth]{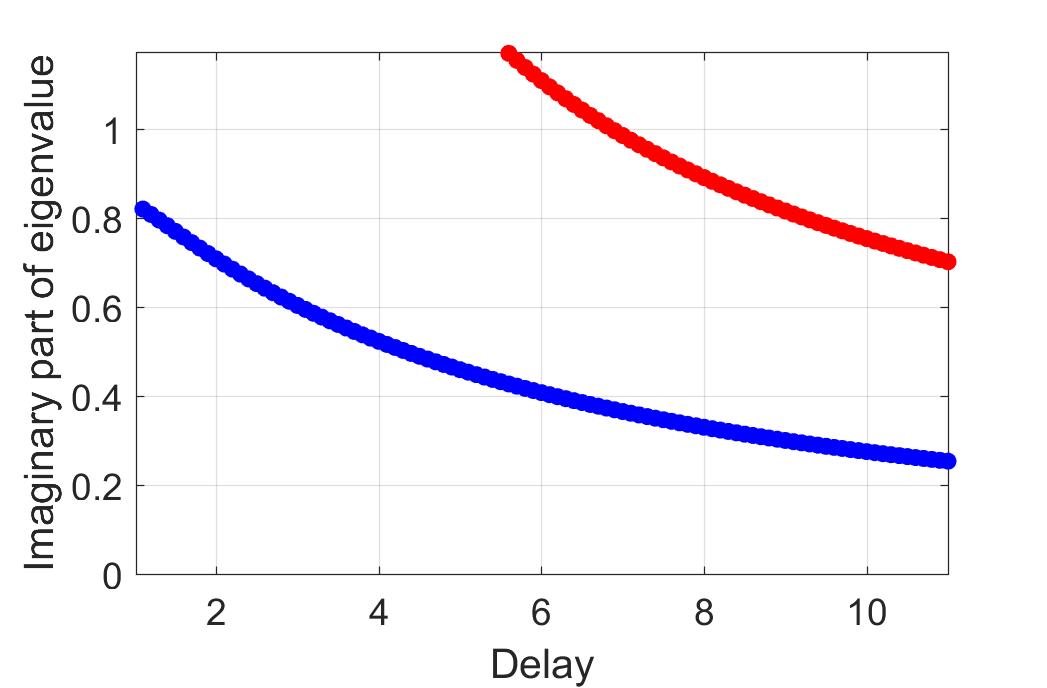}
		\end{subfigure}
		\caption{Tracking two complex eigenvalues to see how the value of their real part changes and how the value of their imaginary part changes.}\label{fig:delay1}
	\end{figure}
	
	
	\subsection{Global in time existence of solutions}
	
	In this section, by Picard's method we prove the existence of solutions to the problem (1.1)--(1.5).
	\begin{theorem}\label{Th-exist}
		If $A > 0$, $p_i > 0$,  and
		$$
		a'_{\tau}(0) + p_3 a_{\tau}(0) = \tfrac{A}{1+p_2 o_0 r_0},
		$$
		then the problem (\ref{a-1})--(\ref{smo-con}) has a unique non-negative solution
		$(a(t)$, $r(t)$, $o(t))$ in $C^2$ for all $t > 0$. Moreover, there exists
		a time $T^* > 0$ such that
		$$
		\tfrac{A p_6}{p_3 p_6 + A p_2(p_5+1)} \leqslant   o(t), \,
		a (t) \leqslant \tfrac{A}{p_3} , \ \tfrac{p_5}{p_6} \leqslant  r (t) \leqslant \tfrac{p_5 + 1}{p_6}
		\ \ \forall \, t \geqslant T^*.
		$$
	\end{theorem}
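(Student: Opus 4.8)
The plan is to treat the delay system by the \emph{method of steps}: on each interval $[n\tau,(n+1)\tau]$ the term $a(t-\tau)$ is already determined by the previous step, so (\ref{a-1})--(\ref{a-3}) reduces to a genuine ODE initial value problem for $(a,r,o)$. On such an interval the right-hand sides $f_1,f_2,f_3$ are smooth in $(a,r,o)$ — the denominators satisfy $1+p_2or\geq1$ and $p_4+(or)^2\geq p_4>0$ on $\{o,r\geq0\}$ — hence locally Lipschitz, so Picard iteration (as invoked in the statement) yields a unique local $C^1$ solution on $[0,\tau]$, then on $[\tau,2\tau]$, and so on. To globalize, I would establish uniform a priori bounds: from $f_1\leq A-p_3a$ one gets $a(t)\leq\max\{a_0,A/p_3\}$; then $o'\leq\max\{a_0,A/p_3\}-o$ bounds $o$; and $f_2\leq1+p_5-p_6r$ bounds $r$. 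Since the solution remains in a fixed compact set, it cannot blow up in finite time, so the step-by-step construction continues indefinitely, producing a unique solution on $[0,\infty)$.

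Non-negativity I would prove inductively over the steps, using integrating factors. From $(e^{p_6 t}r)'=e^{p_6t}\big(\tfrac{(or)^2}{p_4+(or)^2}+p_5\big)\geq p_5e^{p_6t}$ one gets $r(t)\geq r_0e^{-p_6t}+\tfrac{p_5}{p_6}(1-e^{-p_6t})>0$; assuming $a\geq0$ on the previous interval, $(e^to)'=e^ta(t-\tau)\geq0$ gives $o(t)\geq o_0e^{-t}>0$; and then $(e^{p_3t}a)'=e^{p_3t}\tfrac{A}{1+p_2or}\geq0$ gives $a(t)\geq a_\tau(0)e^{-p_3t}\geq0$, closing the induction (indeed $r,o>0$ and $a>0$ for $t>0$). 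For the $C^2$ regularity I would bootstrap: on $(0,\tau)$, since $a_\tau\in C^1[-\tau,0]$, the equation $o'=a_\tau(t-\tau)-o$ gives $o\in C^2$, then $r'=f_2(o,r)$ and $a'=f_1(a,r,o)$ give $r,a\in C^2$; the same argument works on each $(n\tau,(n+1)\tau)$. At the matching point $t=\tau$ the only nontrivial requirement is continuity of $o''(t)=a'(t-\tau)-o'(t)$, i.e. continuity of $a'$ at $t=0$: the left value is $a_\tau'(0)$ and the right value is $f_1(a_0,r_0,o_0)=\tfrac{A}{1+p_2o_0r_0}-p_3a_\tau(0)$, and these coincide exactly by the assumed compatibility relation $a_\tau'(0)+p_3a_\tau(0)=\tfrac{A}{1+p_2o_0r_0}$. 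Hence $(a,r,o)\in C^2$ for all $t>0$.

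For the asymptotic localization I would iterate comparison estimates on $\overline x:=\limsup_{t\to\infty}x(t)$ and $\underline x:=\liminf_{t\to\infty}x(t)$, $x\in\{a,o,r\}$. The crude bounds above give $\overline a,\overline o\leq A/p_3$, $\overline r\leq(1+p_5)/p_6$, $\underline r\geq p_5/p_6$, and a variation-of-constants estimate for $o'=a(t-\tau)-o$ gives $\overline o\leq\overline a$, $\underline o\geq\underline a$. Using $\limsup(or)\leq\overline o\,\overline r\leq\tfrac{A(1+p_5)}{p_3p_6}=:M_0$ and monotonicity of $x\mapsto x^2/(p_4+x^2)$, the Hill term has $\limsup\leq M_0^2/(p_4+M_0^2)<1$, whence $\overline r\leq\tfrac1{p_6}\big(\tfrac{M_0^2}{p_4+M_0^2}+p_5\big)<\tfrac{1+p_5}{p_6}$ (a strict improvement); and $a'\geq\tfrac{A}{1+p_2M_0}-p_3a$ gives $\underline a\geq\tfrac{A}{p_3(1+p_2M_0)}>0$, hence $\underline o>0$ and then $\underline r>p_5/p_6$ strictly. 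One more round: $a'\leq\tfrac{A}{1+p_2\underline o\,\underline r}-p_3a$ with $\underline o\,\underline r>0$ gives $\overline a,\overline o<A/p_3$; and $a'\geq\tfrac{A}{1+p_2\overline o\,\overline r}-p_3a$ with $\overline o\,\overline r<\tfrac{A(1+p_5)}{p_3p_6}$ gives $\underline a,\underline o>\tfrac{Ap_6}{p_3p_6+Ap_2(p_5+1)}$, precisely the stated constant. Each of the six inequalities is strict, so each holds with margin for $t$ beyond some threshold; taking $T^*$ to be the maximum of these finitely many thresholds yields the claim.

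The main obstacle — and essentially the only step requiring genuine care — is this bootstrapping of the $\limsup/\liminf$ estimates through the coupled nonlinearity, in particular tracking the strictness of the intermediate inequalities so that the final bounds hold as stated for $t\geq T^*$ rather than merely up to an $\varepsilon$. Local and global existence, uniqueness, and non-negativity are routine for this structured right-hand side, and the $C^2$-regularity reduces to the observation that the compatibility hypothesis is exactly the condition matching the one-sided derivatives of $a$ at $t=0$ (hence of $o''$ at $t=\tau$).
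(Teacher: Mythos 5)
Your proposal is correct and follows essentially the same route as the paper: the method of steps with Picard iteration on each interval $[(k-1)\tau,k\tau]$, explicit integral representations yielding positivity and the crude bounds, the compatibility condition used exactly to match the one-sided derivatives at the junction (the paper's ``fitting condition'' (\ref{fit})), and iterated comparison estimates for the asymptotic localization. The only cosmetic difference is that the paper organizes the final bounds as limits of explicit bounding sequences $\underline{a}_k,\overline{a}_k,\dots$ as $k\to\infty$, whereas you run the equivalent bootstrap on $\limsup/\liminf$; your version is, if anything, slightly more careful about why the bounds hold for all $t\geqslant T^*$.
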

	
	For example, if $a_{\tau}(t) = a_0 + \Lambda \, t^2 e^{-t}$ then we get
	$$
	a_0 = \tfrac{A}{p_3(1+p_2 o_0 r_0)}, \ o_0 > 0, \ r_0 > 0 .  
	$$
	
	\begin{proof}[Proof of Theorem~\ref{Th-exist}]
		
		We will construct a solution to (\ref{a-1})--(\ref{smo-con}) by the iterative  process.
		First of all, we will look for a solution on the interval $[0,\tau]$.
		From (\ref{a-3}) we obtain that
		\begin{multline} \label{o-1}
		o(t) = e^{-t} o_0 + e^{-t} \int \limits_0^t {  a_{\tau} (s-\tau)e^{ s} \,ds  } =  
		e^{-t} o_0 + e^{-(t-\tau)} \int \limits_{-\tau}^{t-\tau} {  a_{\tau} (s )e^{ s} \,ds  } =: o_1(t) \ \forall \, t \in [0,\tau].
		\end{multline}
		So, by (\ref{smo-con}) and (\ref{o-1}) we have $  o(t) \in C^2[0,\tau]$,  and
		\begin{multline}\label {o-2}
		\underline{o}_1(t) := e^{-t} o_0 + (1 - e^{-t})
		\mathop {\min} \limits_{[-\tau,0]} a_{\tau}(t) \leqslant o(t) \leqslant  
		\overline{o}_1(t) := e^{-t} o_0 + (1 - e^{-t})
		\mathop {\max} \limits_{[-\tau,0]} a_{\tau}(t)
		\end{multline}
		for all $t \in [0,\tau]$. Integrating (\ref{a-1}) and (\ref{a-2}) on the interval $[0,\tau]$, taking into account
		(\ref{o-1}), we arrive at
		\begin{equation}\label{o-3}
		a(t) = e^{-p_3 t } a_{\tau}(0) + A \,e^{-p_3 t} \int \limits_0^t { \tfrac{  e^{p_3 s} \,ds}{ 1 + p_2 r(s) o_1(s)}  } ,
		\end{equation}
		\begin{equation}\label{o-4}
		r(t) = e^{-p_6 t } r_0  - p_4 \,e^{-p_6 t} \int \limits_0^t { \tfrac{  e^{p_6 s} \,ds}{ p_4 +   r^2(s) o^2_1(s)}  }
		+ \tfrac{p_5 + 1}{p_6} (1 - e^{-p_6 t})
		\end{equation}
		for all $t \in [0,\tau]$. By (\ref{o-3}), (\ref{o-4}) we find that
		\begin{equation}\label{o-5}
		\underline{r}_1(t) := e^{-p_6 t } r_0 + \tfrac{p_5}{p_6} (1 - e^{-p_6 t}) \leqslant r(t) \leqslant
		\overline{r}_1(t):= e^{-p_6 t } r_0 + \tfrac{p_5 + 1}{p_6} (1 - e^{-p_6 t}),
		\end{equation}
		\begin{multline}\label{o-6}
		\underline{a}_1(t) := e^{-p_3 t } a_{\tau}(0) + \tfrac{A}{p_3[1 + p_2 \mathop {\max} \limits_{[ 0, \tau]} \overline{o}_1 (t)
			\mathop {\max} \limits_{[ 0, \tau]} \overline{r}_1 (t) ]} (1 - e^{-p_3 t}) \leqslant a(t) \leqslant \\
		\overline{a}_1(t):= e^{-p_3 t } a_{\tau}(0) + \tfrac{A}{p_3} (1 - e^{-p_3 t})
		\end{multline}
		for all $t \in [0,\tau]$. As a result, estimates (\ref{o-2}), (\ref{o-5}), (\ref{o-6})
		imply positivity of $o(t),\,a(t),\,r(t)$ on $[0,\tau]$ provided
		$o_0 > 0$, $r_0 > 0$, and $a_{\tau}(0) > 0$.  As the right-hand side of (\ref{o-4}) is Lipschitz
		continuous on $r$ then there exists a  unique solution of (\ref{o-4}) on whole interval
		$[0,\tau]$ and, as a result, the one of  (\ref{o-3}).  Moreover, obviously the solution
		$o(t),\,a(t),\,r(t) \in C^2 [0,\tau]$
		if the following fitting condition  is true:
		
		\begin{equation}\label{fit}
		a'_{\tau}(0) + p_3 a_{\tau}(0) = \tfrac{A}{1+p_2 o_0 r_0}.  
		\end{equation}

		Let us denote the corresponding solution to (\ref{o-1}), (\ref{o-5}), (\ref{o-6})
		on $[0,\tau]$ by $ (o_1(t), a_1(t), r_1(t))$. Now we will find a solution
		on the interval $[\tau, 2\tau]$. By (\ref{a-1})--(\ref{a-3}) we get
		\begin{multline} \label{o-7}
		o(t) = e^{-(t-\tau) } o_1 (\tau) + e^{-t} \int \limits_{\tau}^t {  a(s-\tau)e^{ s} \,ds  } =
		e^{-(t-\tau) } o_1(\tau) +  \\
		e^{-(t-\tau)} \int \limits_{0}^{t-\tau} {  a_{1} (s )e^{ s} \,ds  } =
		e^{- t } o_0 + e^{-(t-\tau)} \Bigl[ \int \limits_{-\tau}^{0} {  a_{\tau} (s )e^{ s} \,ds  } + 
		\int \limits_{0}^{t-\tau} {  a_{1} (s )e^{ s} \,ds  } \Bigr]  =: o_2(t) ,
		\end{multline}
		\begin{equation}\label{o-8}
		a(t) = e^{-p_3 ( t -\tau) } a_{1}(\tau) + A \,e^{-p_3 t} \int \limits_{\tau}^t { \tfrac{  e^{p_3 s} \,ds}{ 1 + p_2 r(s) o_2(s)}  },
		\end{equation}
		\begin{equation}\label{o-9}
		r(t) = e^{-p_6 (t -\tau) } r_1(\tau)  - p_4 \,e^{-p_6 t} \int \limits_{\tau}^t { \tfrac{  e^{p_6 s} \,ds}{ p_4 +   r^2(s) o^2_2(s)}  }
		+ \tfrac{p_5 + 1}{p_6} (1 - e^{-p_6 ( t - \tau)})
		\end{equation}
		for all $t \in [\tau,2\tau]$. The system (\ref{o-7})--(\ref{o-9}) has a  unique solution
		$o_2(t),\,a_2(t),\,r_2(t) \in C^2 [\tau,2\tau]$.  Moreover,
		\begin{multline}\label {o-2-2}
		\underline{o}_2(t) := e^{-t} o_0 + e^{-t} (e^{\tau} -1)
		\mathop {\min} \limits_{[-\tau,0]} a_{\tau}(t)  +  (1 - e^{-(t -\tau)})
		\mathop {\min} \limits_{[0,\tau]} a_{1}(t) \leqslant o(t) \leqslant  \\
		\overline{o}_2(t) := e^{-t} o_0 + e^{-t} (e^{\tau} -1)
		\mathop {\max} \limits_{[-\tau,0]} a_{\tau}(t)  +  (1 - e^{-(t -\tau)})
		\mathop {\max} \limits_{[0,\tau]} a_{1}(t),
		\end{multline}
		\begin{multline} \label{o-5-2}
		\underline{r}_2(t) := e^{-p_6 (t -\tau) } r_1(\tau) + \tfrac{p_5}{p_6} (1 - e^{-p_6 (t-\tau)}) \leqslant r(t) \leqslant   \\
		\overline{r}_2(t):= e^{-p_6 (t -\tau) } r_1(\tau) + \tfrac{p_5 + 1}{p_6} (1 - e^{-p_6 (t -\tau)}),
		\end{multline}
		\begin{multline}\label{o-6-2}
		\underline{a}_2(t) := e^{-p_3 (t -\tau) } a_{1}(\tau) a+ \tfrac{A}{p_3[1 + p_2 \mathop {\max} \limits_{[ \tau,2\tau]} \overline{o}_2 (t)
			\mathop {\max} \limits_{[ \tau,2\tau]} \overline{r}_2 (t) ]} (1 - e^{-p_3 (t-\tau)}) \leqslant \\
		a(t) \leqslant\overline{a}_2(t):= e^{-p_3 (t-\tau) } a_{1}(\tau ) + \tfrac{A}{p_3} (1 - e^{-p_3 (t-\tau)})
		\end{multline}
		for all $t \in [\tau,2\tau]$. Continuing this iteration procedure, we derive
		\begin{equation} \label{o-7-k}
		o(t) = e^{-(t-(k-1)\tau) } o_{k-1} ((k-1)\tau) +
		e^{-(t-\tau)} \int \limits_{(k-2)\tau}^{t-\tau} {  a_{k-1}(s )e^{ s} \,ds  }
		=: o_k(t) ,
		\end{equation}
		\begin{equation}\label{o-8-k}
		a(t) = e^{-p_3 ( t -(k-1)\tau) } a_{k-1}((k-1)\tau) + A \,e^{-p_3 t} \int \limits_{(k-1)\tau}^t { \tfrac{  e^{p_3 s} \,ds}{ 1 + p_2 r(s) o_k(s)}  },
		\end{equation}
		\begin{multline} \label{o-9-k}
		r(t) = e^{-p_6 (t -(k-1)\tau) } r_{k-1}((k-1)\tau)  - p_4 \,e^{-p_6 t} \int \limits_{(k-1)\tau}^t { \tfrac{  e^{p_6 s} \,ds}{ p_4 +   r^2(s) o^2_k(s)}  }
		+  
		\tfrac{p_5 + 1}{p_6} (1 - e^{-p_6 ( t - (k-1)\tau)})
		\end{multline}
		for all $t \in [(k-1) \tau, k\,\tau ]$, $k \in \mathbb{N}$, where $a_0(t) = a_{\tau}(t)$. This system has a  unique solution
		$o_k(t)$, $a_k(t)$, $r_k(t)$  $\in C^{k+1} [(k-1) \tau, k\,\tau ]$. Moreover,
		\begin{multline}\label {o-2-k}
		\underline{o}_k(t) :=  e^{-(t-(k-1)\tau) } o_{k-1} ((k-1)\tau) + 
		(1 - e^{-(t -(k-1)\tau)})
		\mathop {\min} \limits_{[(k-2)\tau,(k-1)\tau]} a_{k-1}(t) \leqslant o(t) \leqslant  \\
		\overline{o}_k(t) :=  e^{-(t-(k-1)\tau) } o_{k-1} ((k-1)\tau) +
		(1 - e^{-(t -(k-1)\tau)})
		\mathop {\max} \limits_{[(k-2)\tau,(k-1)\tau]} a_{k-1}(t),
		\end{multline}
		\begin{multline} \label{o-5-k}
		\underline{r}_k(t) := e^{-p_6 (t -(k-1)\tau) } r_{k-1}((k-1)\tau) + \tfrac{p_5}{p_6} (1 - e^{-p_6 (t-(k-1)\tau)}) \leqslant r(t) \leqslant   \\
		\overline{r}_k(t):= e^{-p_6 (t -(k-1)\tau) } r_{k-1}((k-1)\tau) + \tfrac{p_5 + 1}{p_6} (1 - e^{-p_6 (t -(k-1)\tau)}),
		\end{multline}
		\begin{multline}\label{o-6-k}
		\underline{a}_k(t) := e^{-p_3 (t -(k-1)\tau) } a_{k-1}((k-1)\tau) + 
		\tfrac{A}{p_3[1 + p_2 \mathop {\max} \limits_{[ (k-1)\tau,k\tau]} \overline{o}_k (t)
			\mathop {\max} \limits_{[ (k-1)\tau, k \tau]} \overline{r}_k (t) ]} (1 - e^{-p_3 (t-(k-1)\tau)}) \leqslant \\
		a(t) \leqslant\overline{a}_k(t):= e^{-p_3 (t-(k-1)\tau) } a_{k-1}((k-1)\tau ) + \tfrac{A}{p_3} (1 - e^{-p_3 (t-(k-1)\tau)})
		\end{multline}
		for all $t \in [(k-1) \tau, k\,\tau ]$. As a result, the problem (\ref{a-1})--(\ref{in-con}) has unique global in time solution,
		and, letting $k \to +\infty$, we get
		$$
		\tfrac{A p_6}{p_3 p_6 + A p_2(p_5+1)} \leqslant \mathop {\lim} \limits_{k \to +\infty} o_k(t), \
		\mathop {\lim} \limits_{k \to +\infty} a_k(t) \leqslant \tfrac{A}{p_3} ,
		$$
		$$
		\tfrac{p_5}{p_6} \leqslant \mathop {\lim} \limits_{k \to +\infty} r_k(t) \leqslant \tfrac{p_5 + 1}{p_6}.
		$$
	\end{proof}
	
	
	\subsection{Existence of periodic solutions}

	\begin{theorem}\label{Th-periodic}
		Under the conditions of Theorem~\ref{Th-exist}, the system (\ref{a-1})--(\ref{smo-con}) has at least one
		$C^2$-smooth $T$-periodic solution, where $T\neq \tau$.
	\end{theorem}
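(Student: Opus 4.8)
The plan is to recast the existence of a $T$-periodic solution as a fixed-point problem for a completely continuous operator on a closed, bounded, convex set, and then to invoke Schauder's fixed-point theorem; the a priori bounds already obtained in Theorem~\ref{Th-exist} will furnish the invariant set. Fix $T>0$ with $T\neq\tau$ and let $C_T$ denote the Banach space of continuous $T$-periodic functions on $\mathbb{R}$ with the sup norm. Since $p_3,p_6>0$, the numbers $1-e^{-p_3T}$, $1-e^{-p_6T}$ and $1-e^{-T}$ are all positive, so by the variation-of-constants formula a triple $(a,r,o)\in C_T^3$ is a $T$-periodic solution of (\ref{a-1})--(\ref{a-3}) if and only if it is a fixed point of $\mathcal{F}=(\mathcal{F}_1,\mathcal{F}_2,\mathcal{F}_3)$, where
\begin{equation*}
(\mathcal{F}_1(a,r,o))(t)=\tfrac{1}{1-e^{-p_3T}}\int_{t-T}^{t}e^{-p_3(t-s)}\tfrac{A}{1+p_2o(s)r(s)}\,ds,
\end{equation*}
\begin{equation*}
(\mathcal{F}_2(a,r,o))(t)=\tfrac{1}{1-e^{-p_6T}}\int_{t-T}^{t}e^{-p_6(t-s)}\Bigl[1+p_5-\tfrac{p_4}{p_4+o^2(s)r^2(s)}\Bigr]ds,
\end{equation*}
\begin{equation*}
(\mathcal{F}_3(a,r,o))(t)=\tfrac{1}{1-e^{-T}}\int_{t-T}^{t}e^{-(t-s)}a(s-\tau)\,ds.
\end{equation*}

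As candidate invariant set I would take
\begin{equation*}
\mathcal{K}:=\Bigl\{(a,r,o)\in C_T^3:\ \tfrac{Ap_6}{p_3p_6+Ap_2(p_5+1)}\leqslant a,o\leqslant\tfrac{A}{p_3},\ \tfrac{p_5}{p_6}\leqslant r\leqslant\tfrac{p_5+1}{p_6}\Bigr\},
\end{equation*}
which is nonempty (it contains the equilibrium $(a^*,r^*,o^*)$ of Lemma~\ref{L-sss}), closed, bounded and convex. The first step is to verify $\mathcal{F}(\mathcal{K})\subseteq\mathcal{K}$: on $\mathcal{K}$ one has $1\leqslant1+p_2or\leqslant1+\tfrac{Ap_2(p_5+1)}{p_3p_6}$ and $0\leqslant\tfrac{p_4}{p_4+(or)^2}\leqslant1$, and since $\tfrac{1}{1-e^{-pT}}\int_{t-T}^{t}e^{-p(t-s)}\,ds=\tfrac{1}{p}$, substituting these bounds into the integrals reproduces exactly the two-sided inequalities that define $\mathcal{K}$ — the very computation appearing in the closing estimates of the proof of Theorem~\ref{Th-exist}. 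The second step is continuity of $\mathcal{F}$ on $C_T^3$: the superposition maps $(o,r)\mapsto\tfrac{A}{1+p_2or}$ and $(o,r)\mapsto\tfrac{p_4}{p_4+(or)^2}$ send $C_T^2$ continuously into $C_T$ (their denominators stay bounded away from $0$ and $+\infty$ on $\mathcal{K}$), $a\mapsto a(\cdot-\tau)$ is an isometry, and each integral kernel defines a bounded linear operator on $C_T$. The third step is compactness: if $h=\mathcal{F}_i(a,r,o)$ with $(a,r,o)\in\mathcal{K}$, then $h$ is the $T$-periodic solution of a scalar linear equation $h'+p\,h=g$ with $\|g\|_{\infty}$ bounded uniformly over $\mathcal{K}$, so $\|h\|_{\infty}$ and $\|h'\|_{\infty}$ are uniformly bounded; thus $\mathcal{F}(\mathcal{K})$ is equi-Lipschitz and Arzelà--Ascoli gives relative compactness. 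Schauder's theorem then yields a fixed point $(a,r,o)\in\mathcal{K}$, i.e. a continuous $T$-periodic solution of (\ref{a-1})--(\ref{a-3}). A short bootstrap upgrades the regularity: the integral representations show $(a,r,o)\in C^1$; then the right-hand sides of (\ref{a-1})--(\ref{a-2}) are $C^1$ along the solution since $a,o,r\in C^1$, and $a(\cdot-\tau)\in C^1$ since $a\in C^1$, so $(a,r,o)\in C^2$.

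I expect the main obstacle to be the bookkeeping in the first step, namely verifying $\mathcal{F}(\mathcal{K})\subseteq\mathcal{K}$ with the precise constants: this is where the sharp lower bound $\tfrac{Ap_6}{p_3p_6+Ap_2(p_5+1)}$ for $a$ and $o$, and hence the hypotheses inherited from Theorem~\ref{Th-exist}, are genuinely used; the continuity and compactness steps are routine given the structure. A secondary subtlety is the role of the restriction $T\neq\tau$: the Schauder argument a priori produces only \emph{some} $T$-periodic solution, which could be the equilibrium, and when $T=\tau$ periodicity forces $a(t-\tau)=a(t)$, so any such triple would in fact solve the delay-free system (\ref{eq-1})--(\ref{eq-3}), whose only bounded steady state is the asymptotically stable fixed point of Lemma~\ref{L-sss}; excluding $T=\tau$ keeps us in the genuinely delayed regime, where — in combination with the stability-switch analysis of Section~\ref{sec-3} for $\tau$ beyond a critical value — one can hope to locate a non-constant orbit, for instance by rerunning the fixed-point argument on $\mathcal{K}$ with a small ball around $(a^*,r^*,o^*)$ removed, via an ejective-fixed-point or topological-degree variant.
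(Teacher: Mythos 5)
Your proof is correct, but it takes a genuinely different route from the paper's. The paper follows Krasnosel'skii's translation-operator scheme: it rewrites the system as $\textbf{x}'=M\textbf{x}+B\textbf{x}(t-\tau)+\textbf{f}(\textbf{x})$, reduces $\tau$ modulo $T$, introduces the periodic shift $S_\tau$, passes to the integrated equation $\textbf{x}(t)=\textbf{x}(0)+\int_0^t(M\textbf{x}+BS_\tau\textbf{x}+\textbf{f}(\textbf{x}))\,ds$, derives an a priori bound on periodic orbits via Gr\"onwall's lemma, and invokes compactness of the integral operator together with a fixed-point theorem, leaning on the cited Krasnosel'skii references for the fact that fixed points of this construction are indeed $T$-periodic. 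You instead work directly in the space $C_T$ of $T$-periodic continuous functions, represent the would-be solution through the periodic Green's kernel $\frac{1}{1-e^{-pT}}\int_{t-T}^{t}e^{-p(t-s)}(\cdot)\,ds$, and apply Schauder on the explicit box $\mathcal{K}$ built from the asymptotic bounds of Theorem~\ref{Th-exist}. What your approach buys is that periodicity is encoded in the operator itself rather than deferred to an external framework --- the paper's integral equation as written is just the integrated initial-value problem and does not by itself force $\textbf{x}(T)=\textbf{x}(0)$, so your formulation is the more self-contained of the two; your invariance check $\mathcal{F}(\mathcal{K})\subseteq\mathcal{K}$ also reuses the constants of Theorem~\ref{Th-exist} in a transparent way, where the paper only needs the cruder Gr\"onwall bound. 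What the paper's route buys is generality (no need for an invariant order box; the Gr\"onwall estimate works for any nonlinearity of at most linear growth) and brevity. Both arguments share the same limitation, which you correctly flag: they produce \emph{some} $T$-periodic solution, which may well be the constant equilibrium $(a^*,r^*,o^*)$, and neither the paper nor your argument actually uses the hypothesis $T\neq\tau$; your closing remarks on ejective-fixed-point or degree-theoretic refinements point at what would be needed to guarantee a non-constant orbit, but that is beyond what either proof establishes.
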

	
	\begin{proof}[Proof of Theorem~\ref{Th-periodic}]
		
		The main line of proof follows (see \cite[pp. 278--280]{Krasn1}) (see also \cite[Theorem~5]{Krasn2}).
		Rewrite the system of (\ref{a-1})--(\ref{a-3}) in the following form
		\begin{equation}\label{s-1}
		\textbf{x}'(t) = M \,\textbf{x}(t) + B \, \textbf{x}(t -\tau) + \textbf{f}(\textbf{x}(t)),
		\end{equation}
		where
		$$
		\textbf{x}(t) = \left (\begin{array}{c}
		a(t) \\
		r(t) \\
		o(t)
		\end{array} \right), \
		\textbf{f}(\textbf{x}(t)) = \left (\begin{array}{c}
		\tfrac{A}{1+ p_2 o r} \\
		p_5 + \tfrac{(or)^2}{p_4+  (o r)^2} \\
		0
		\end{array} \right),
		$$
		$$
		M = \left(
		\begin{array}{ccc}
		-p_3 & 0 & 0 \\
		0 & -p_6 & 0 \\
		0 & 0 & -1 \\
		\end{array}
		\right), \
		B = \left(
		\begin{array}{ccc}
		0 & 0 & 0 \\
		0 & 0 & 0 \\
		1 & 0 & 0 \\
		\end{array}
		\right).
		$$
		Obviously, the right-hand side of (\ref{s-1}) is $T$-periodic with respect to $t$ as it does not
		depend on time explicitly. Without loss of generality, we may assume that
		$$
		0 \leqslant \tau < T.
		$$
		This is true because otherwise we could represent the $\tau$ in the form
		$$
		0 < \tau = n T + \tau_1, \text{ where } n \in \mathbb{Z}^+,\ \tau_1 \in [0,T).
		$$
		Then shift to the auxilliary equation
		$$
		\textbf{x}'(t) = M \,\textbf{x}(t) + B \, \textbf{x}(t -\tau_1) + \textbf{f}(\textbf{x}(t))
		$$
		the $T$-periodic solutions of which coincide with the $T$-periodic solutions of (\ref{s-1}).
		
		On the set of all vector-valued functions $\textbf{x}(t)$ defined on $[0,T]$, let us
		define an operator $S_{\tau}$ by
		$$
		S_{\tau} \textbf{x}(t) :=
		\left \{ \begin{gathered}
		\textbf{x}(t -\tau) \text{ if } \tau \leqslant t \leqslant T, \\
		\textbf{x}( t-\tau + T) \text{ if } 0 \leqslant t < \tau .
		\end{gathered} \right.
		$$
		Note that the  $T$-periodic solutions of (\ref{s-1}) coincides with the solutions of
		the following integral equations:
		\begin{equation}\label{s-2}
		\textbf{x}(t) = T(\tau,  \textbf{x}):= \textbf{x}(0) + \int \limits_0^t {( M \,\textbf{x}(s) + B \, S_{\tau} \textbf{x}(s) + \textbf{f}(\textbf{x}(s)))\,ds }.
		\end{equation}
		The operator $T(\tau,  \textbf{x})$ maps every continuous vector-valued function $\textbf{x}(t)$
		into a continuous vector-valued function for $0 \leqslant t \leqslant T$, therefore  $T(\tau,  \textbf{x})$
		is compact in $C$. Next, we will show that for all  $T$-periodic solutions $\textbf{x}_p(t) $ there exists $R > 0$ such that
		\begin{equation}\label{s-3}
		|  \textbf{x}_p(t) | \leqslant R < \infty.
		\end{equation}
		Really, from (\ref{s-2}) we deduce that
		\begin{multline*}
		| \textbf{x}_p(t)|  \leqslant | \textbf{x}_p (0)| + \int \limits_0^t {[ |M| \,|\textbf{x}_p(s)| + |B| \, |S_{\tau} \textbf{x}_p(s)| + |\textbf{f}(\textbf{x}_p(s))| ] \,ds } \leqslant    \\
		| \textbf{x}_p(0)| + [(p_3^2 + p_6^2 + 1)^{\frac{1}{2}}  +1]  \int \limits_0^t { |\textbf{x}_p(s)| \,ds } +
		(A^2 + (p_5 + 1)^2 )^{\frac{1}{2}} t.
		\end{multline*}
		From here, using Gr\"{o}nwall's lemma, we arrive at
		$$
		| \textbf{x}_p(t)|  \leqslant ( | \textbf{x}_p(0)| + a ) e^{b T} - a,
		$$
		where $a = \frac{(A^2 + (p_5 + 1)^2 )^{\frac{1}{2}}}{(p_3^2 + p_6^2 + 1)^{\frac{1}{2}}  +1}$, $b = (p_3^2 + p_6^2 + 1)^{\frac{1}{2}}  +1$.
		Hence, (\ref{s-3}) holds with $R = ( | \textbf{x}_p(0)| + a ) e^{b T} - a$.
		As a result, by the fixed point theorem the integral equation (\ref{s-2}) has at least one solution,
		and consequently the equation (\ref{s-1})  has at least one $T$-periodic solution.
	\end{proof}
	\subsection{Periodic solutions with the period $T = \tau$}
	
	\begin{lemma}\label{Th-tau}
		If
		$$
		o_0  =  a_{\tau}(-\tau),\ a_{\tau}(0) = \tfrac{A} { p_3 }\Bigl[1 + p_2 \sqrt{\tfrac{p_4(p_6 r_0 - p_5)}{p_5 + 1 - p_6 r_0}} \Bigr]^{-1},\
		\tfrac{p_5}{p_6} \leqslant r_0 \leqslant \tfrac{p_5+1}{p_6}.
		$$
		then the problem (\ref{a-1})--(\ref{in-con}) has at least one $\tau$-periodic solution.
	\end{lemma}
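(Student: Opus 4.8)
The plan is to remove the delay by using periodicity itself. If $x=(a,r,o)$ is $\tau$-periodic then $a(t-\tau)=a(t)$, so (\ref{a-3}) becomes $o'=a-o$ and the delay system (\ref{a-1})--(\ref{a-3}) collapses to the non-delayed system (\ref{eq-1})--(\ref{eq-3}), i.e. to $x'=(M+B)x+\mathbf f(x)$ with the matrices $M,B$ and the nonlinearity $\mathbf f$ from the proof of Theorem~\ref{Th-periodic}. Conversely, any $\tau$-periodic solution of this reduced ODE solves the delay system. Hence it suffices to produce, for \emph{each} admissible triple $(r_0,o_0,a_\tau(0))$ satisfying the three hypotheses, a $\tau$-periodic orbit of the reduced ODE whose state at $t=0$ is the prescribed one; its periodic continuation to $[-\tau,0]$ then supplies the history $a_\tau$, and by uniqueness of the delay initial value problem (Theorem~\ref{Th-exist}) it is \emph{the} solution of (\ref{a-1})--(\ref{in-con}).

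First I would translate the hypotheses into conditions on the reduced vector field at $t=0$. Evaluating (\ref{a-3}) at $t=0$ gives $o'(0)=a_\tau(-\tau)-o_0$, so the coupling $o_0=a_\tau(-\tau)$ is exactly the statement $o'(0)=0$; combined with the periodicity identity $a_\tau(-\tau)=a(-\tau)=a(0)$ it normalises the phase so that the orbit meets the nullcline $o=a$ at $t=0$. The quantity $\phi(r_0):=\sqrt{p_4(p_6 r_0-p_5)/(p_5+1-p_6 r_0)}$ in the second hypothesis is, by the direct identity $\phi^2/(p_4+\phi^2)=p_6 r_0-p_5$, precisely the value of the product $or$ on the $r$-nullcline at height $r_0$; thus $a_\tau(0)=A/[p_3(1+p_2\phi(r_0))]$ slaves the initial $a$-value to the initial $r$-value through the reduced field, and the range $p_5/p_6\leqslant r_0\leqslant (p_5+1)/p_6$ is exactly the invariant band for $r$ proved in Theorem~\ref{Th-exist}. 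These are the compatibility relations any $\tau$-periodic orbit must satisfy at the normalised phase.

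Next I would establish existence of the orbit by the same completely continuous fixed-point scheme used for Theorem~\ref{Th-periodic}, specialised to period $T=\tau$. On the Banach space of continuous $\tau$-periodic vector functions the shift operator $S_\tau$ reduces to the identity, so a $\tau$-periodic solution is a fixed point of $x\mapsto x(0)+\int_0^t[(M+B)x(s)+\mathbf f(x(s))]\,ds$ subject to $x(\tau)=x(0)$. This map is completely continuous by Arzel\`a--Ascoli (the integral gains a derivative, exactly as in the proof of Theorem~\ref{Th-exist}), and the Gr\"onwall a priori bound already obtained in the proof of Theorem~\ref{Th-periodic} confines every $\tau$-periodic solution to a fixed ball; Schauder's theorem then yields a $\tau$-periodic orbit, which the estimates of Theorem~\ref{Th-exist} keep inside the invariant band.

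The hard part will be the data-matching step: showing that the orbit produced abstractly can be made to pass through \emph{every} admissible state rather than just one, so that the whole hypothesis class is covered. Concretely, one must show that the period-$\tau$ return map restricted to the invariant band has a fixed point for each $r_0$ in the band, with $a_\tau(0)$ slaved to $r_0$ and $a_\tau(-\tau)=o_0$ by the first two hypotheses and the phase fixed by $o'(0)=0$. This is where the nullcline identity for $\phi$ and the phase normalisation must be combined with a degree or $\tau$-continuation argument, rather than a bare Schauder application, and where one must rule out the degenerate scenario in which the only $\tau$-periodic orbit is the stationary point $r_0=r^*$. Establishing this surjectivity of the return map over the band is the crux; the uniform bounds and the compactness needed to run the fixed-point argument are routine consequences of the estimates already recorded in Theorems~\ref{Th-exist} and~\ref{Th-periodic}.
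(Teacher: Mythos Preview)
Your reduction is correct: a $\tau$-periodic solution of the delay system is exactly a $\tau$-periodic solution of the non-delayed ODE, and vice versa. But from that point on your route diverges from the paper's, and the gap you yourself flag --- the data-matching step --- is real and is not closed by the Schauder/degree sketch you outline. A bare Schauder argument on the reduced ODE only produces \emph{some} $\tau$-periodic orbit (possibly the equilibrium), and nothing in your outline forces it to pass through the prescribed state $(a_\tau(0),r_0,o_0)$ for an arbitrary $r_0$ in the band. The ``surjectivity of the return map over the band'' that you identify as the crux is precisely the missing ingredient, and a continuation/degree argument for it is not set up.

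The paper avoids this difficulty by a completely different, much more direct mechanism. It does not use Schauder on the reduced ODE at all. Instead it takes the explicit integral formulas from the step method in Theorem~\ref{Th-exist} on $[0,\tau]$ and writes the closing conditions $a(\tau)=a_\tau(0)$, $o(\tau)=o_0$, $r(\tau)=r_0$ as three scalar functions $f_1(\tau),f_2(\tau),f_3(\tau)$ of the delay. Trivially $f_i(0)=0$. Differentiating each $f_i$ in $\tau$ produces a single algebraic expression in the initial data, and the three hypotheses of the lemma are \emph{exactly} the equations $f_1'(\tau)=f_2'(\tau)=f_3'(\tau)=0$: the condition $o_0=a_\tau(-\tau)$ kills $f_2'$, the condition on $a_\tau(0)$ kills $f_1'$, and the $r_0$-range together with the relation $o_0r_0=\phi(r_0)$ kills $f_3'$. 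Hence $f_i\equiv 0$ and the orbit closes up for every $\tau$. In other words, the paper does not search for a fixed point of a return map; it shows the hypotheses are precisely what make the return-to-start defect vanish identically. This both explains the specific form of the hypotheses (which your approach would have to recover a posteriori) and eliminates the data-matching problem entirely. Your nullcline/phase interpretation of the hypotheses is correct and compatible with this; you simply did not see that those same relations force $f_i'\equiv 0$ directly.
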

	
	\begin{example}\label{ex:periodic1}
		Let $A=1$, $p_2=11$, $p_3=1.2$, $p_4=0.05$, $p_5=0.11$, and $p_6=2.9$. Then the initial conditions are $a_0=\frac{A}{p_3}\left(1+p_2\sqrt{\frac{p_4(p_6 r_0-p_5)}{p_5+1-p_6 r_0}} \right)^{-1}$, $r_0=\frac{1}{2}\left( \frac{p_5}{p_6}+\frac{p_5+1}{p_6}\right)$, and $o_0=a_0$. With these parameter values we solve (\ref{a-1})--(\ref{in-con}) numerically using the Matlab solver dde23 \cite{Matlab}. The resulting periodic solutions can be seen in Figure \ref{fig:periodic1}.
		
		If we perturb the parameters by a bit, the periodicity changes. We illustrate a periodicity change by using the parameters $A=1$, $p_2=7$, $p_3=1.2$, $p_4=0.05$, $p_5=0.51$, and $p_6=3.1$. Then the initial conditions are $a_0=\frac{A}{p_3}\left(1+p_2\sqrt{\frac{p_4(p_6 r_0-p_5)}{p_5+1-p_6 r_0}} \right)^{-1}$, $r_0=\frac{1}{2}\left( \frac{p_5}{p_6}+\frac{p_5+1}{p_6}\right)$, and $o_0=a_0$. The resulting periodic solutions can be seen in Figure \ref{fig:periodic2}.
		
		Periodicity of solutions can also be illustrated by plotting delayed function versus no delay function or function versus derivative as seen in Figure \ref{fig:periodicity}. 
		
		\begin{figure}
			\centering
			\begin{subfigure}[b]{0.45\textwidth}
				\includegraphics[width=\textwidth]{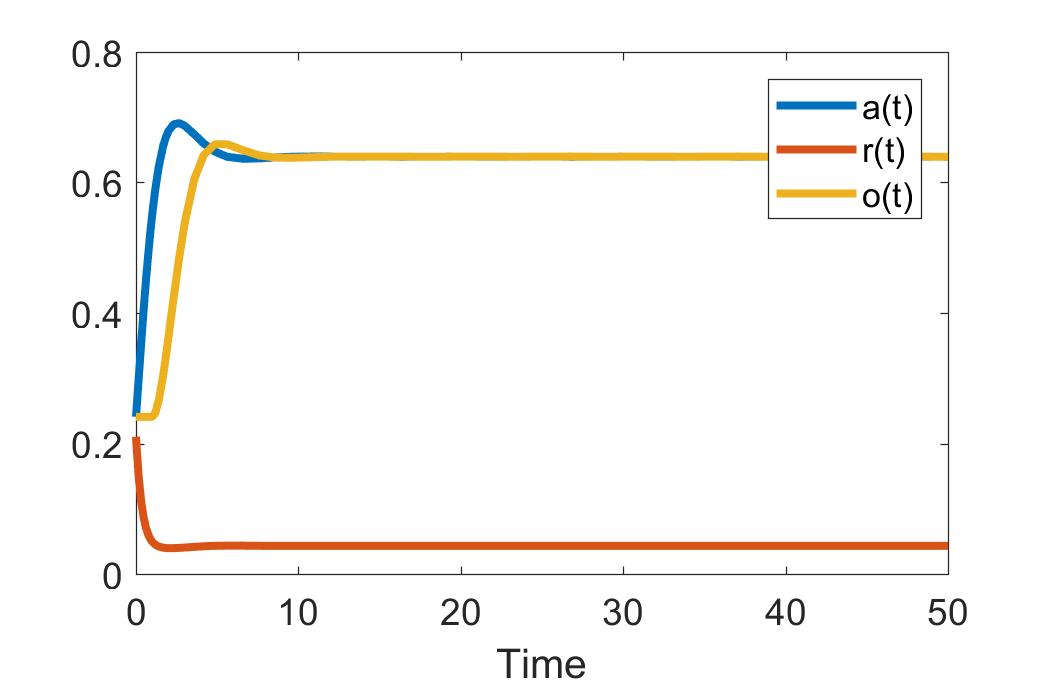}
				\caption{Regular Zoom}
			\end{subfigure}
			\begin{subfigure}[b]{0.45\textwidth}
				\includegraphics[width=\textwidth]{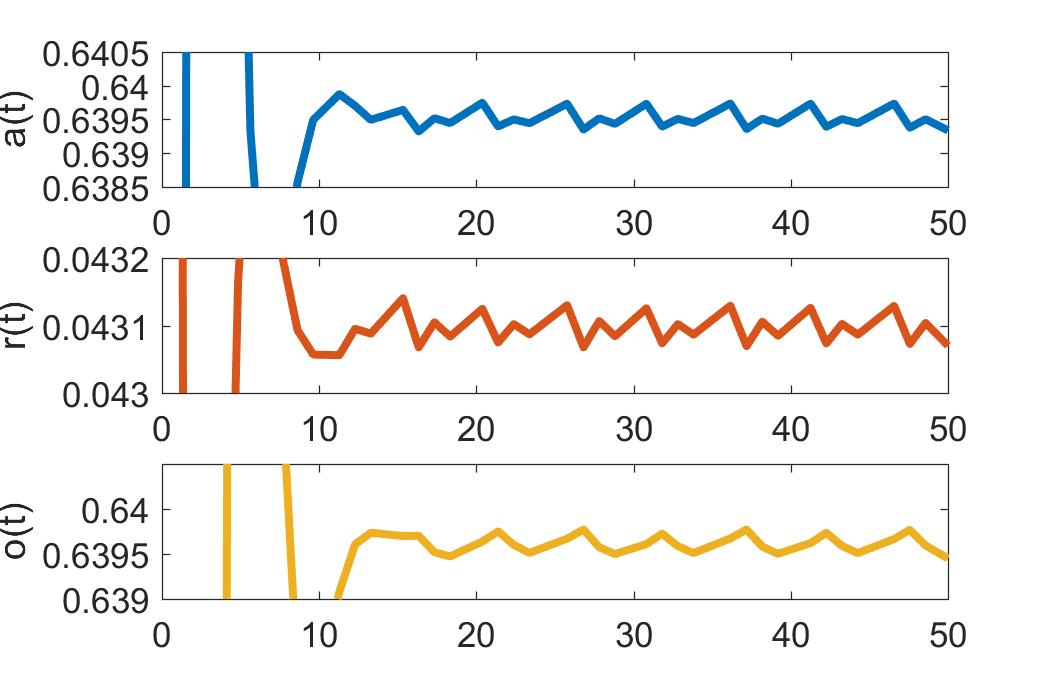}
				\caption{Zoomed in}
			\end{subfigure}
			\caption{Plot of the solutions $a(t)$, $r(t)$, and $o(t)$ with the parameter values in Example \ref{ex:periodic1}. }\label{fig:periodic1}
		\end{figure}
		
		\begin{figure}
			\centering
			\begin{subfigure}[b]{0.45\textwidth}
				\includegraphics[width=\textwidth]{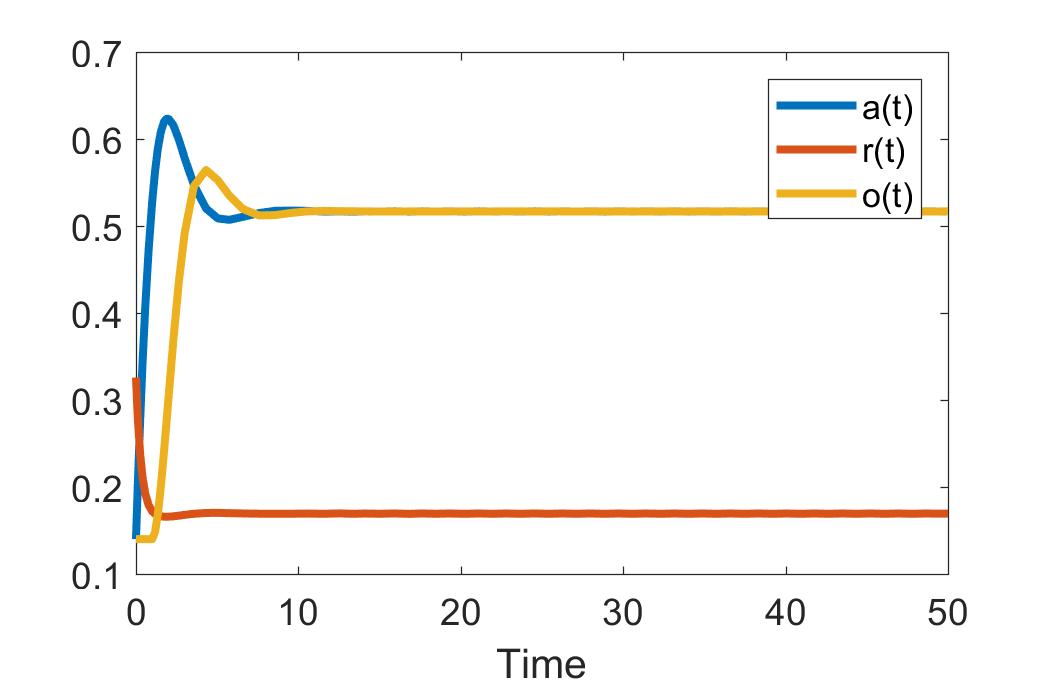}
				\caption{Regular Zoom}
			\end{subfigure}
			\begin{subfigure}[b]{0.45\textwidth}
				\includegraphics[width=\textwidth]{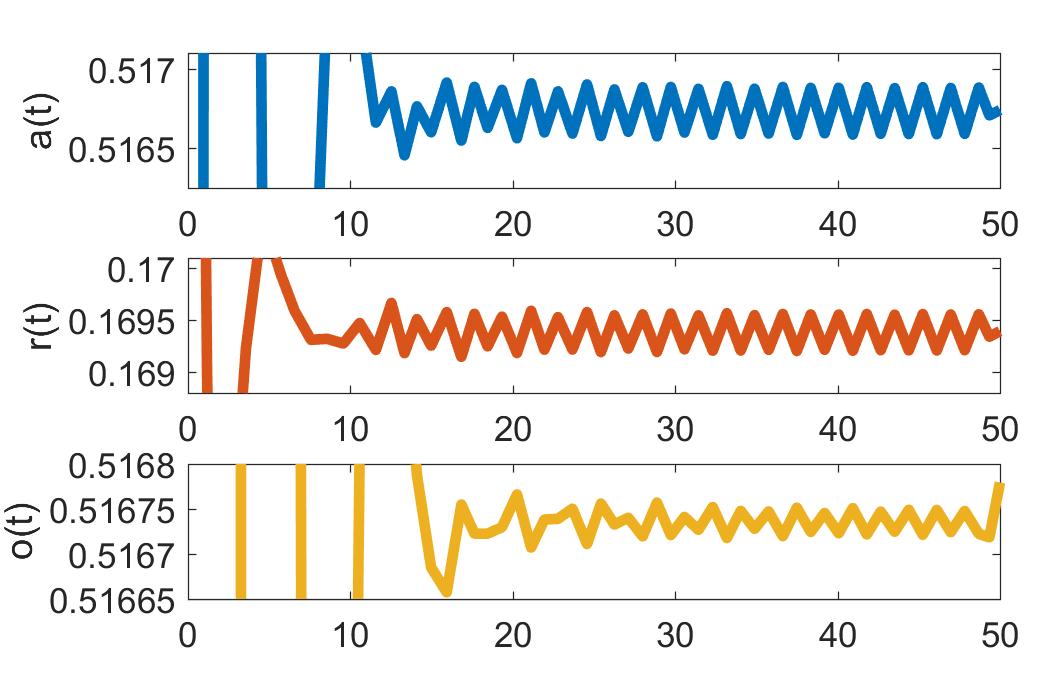}
				\caption{Zoomed in}
			\end{subfigure}
			\caption{Plot of the solutions $a(t)$, $r(t)$, and $o(t)$ with the parameter values in Example \ref{ex:periodic1} part b. }\label{fig:periodic2}
		\end{figure}
	\end{example}
	
	\begin{figure}
		\centering
		\begin{subfigure}[b]{0.45\textwidth}
			\includegraphics[width=\textwidth]{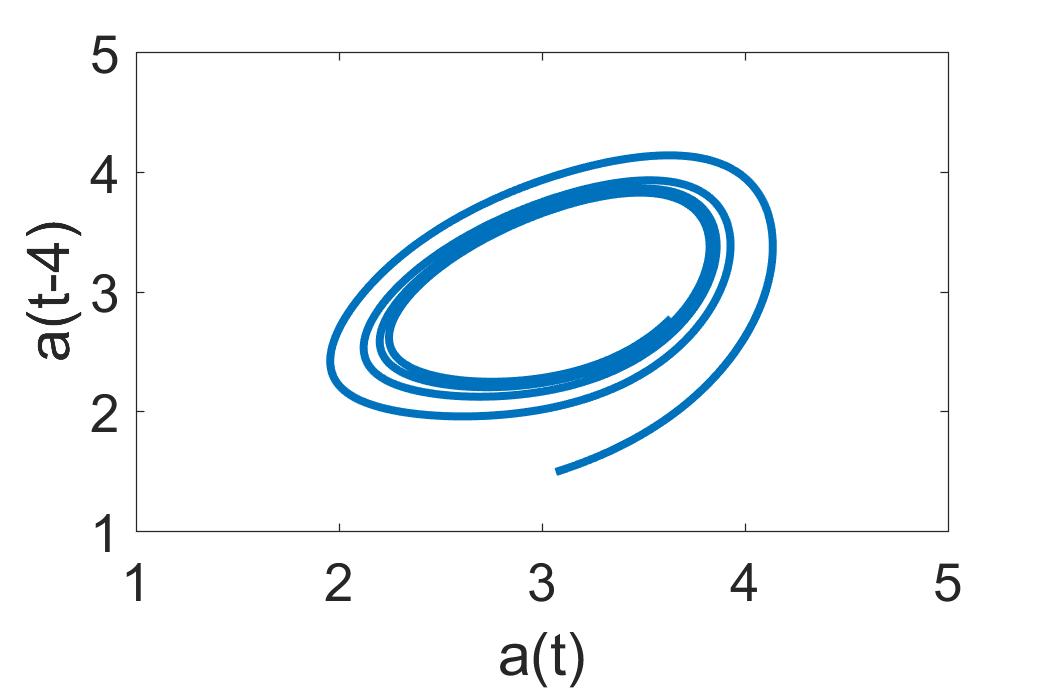}
			\caption{$a(t)$ and $a(t-4)$}
		\end{subfigure}
		\hfill
		\begin{subfigure}[b]{0.45\textwidth}
			\includegraphics[width=\textwidth]{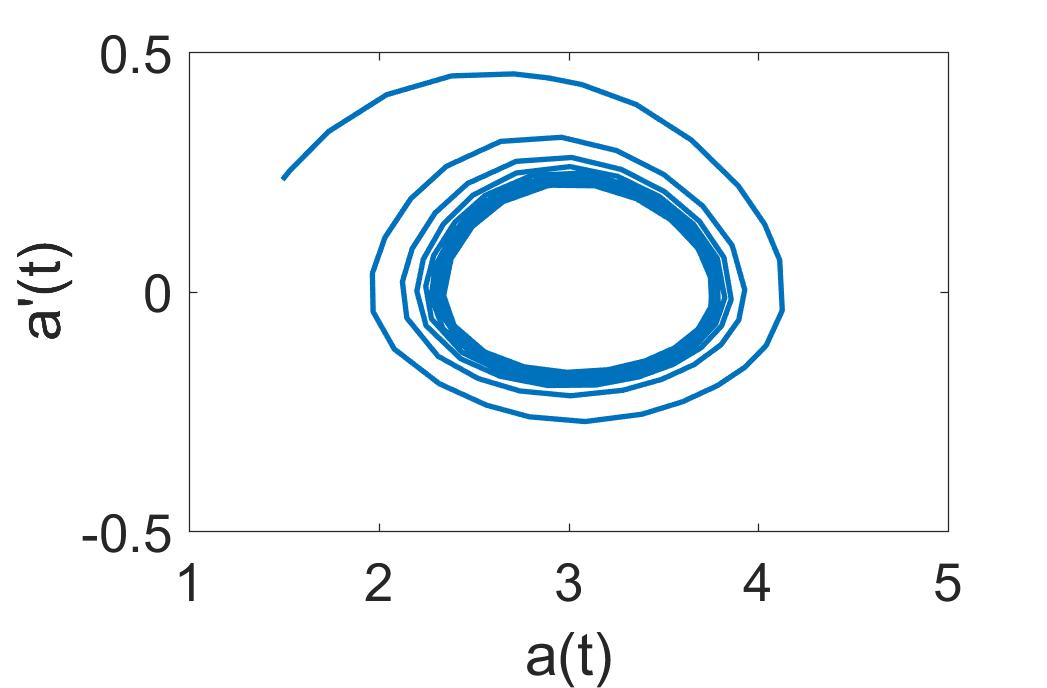}
			\caption{$a(t)$ and $a'(t)$}
		\end{subfigure}
		\vskip\baselineskip
		\begin{subfigure}[b]{0.45\textwidth}
			\includegraphics[width=\textwidth]{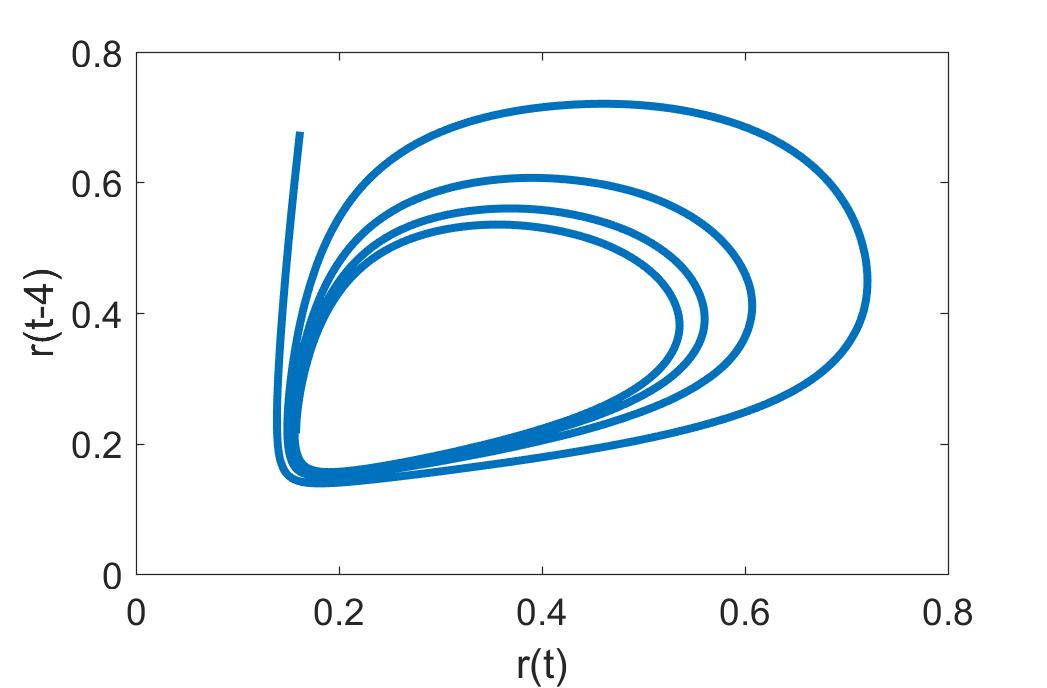}
			\caption{$r(t)$ and $r(t-4)$}
		\end{subfigure}
		\quad
		\begin{subfigure}[b]{0.45\textwidth}
			\includegraphics[width=\textwidth]{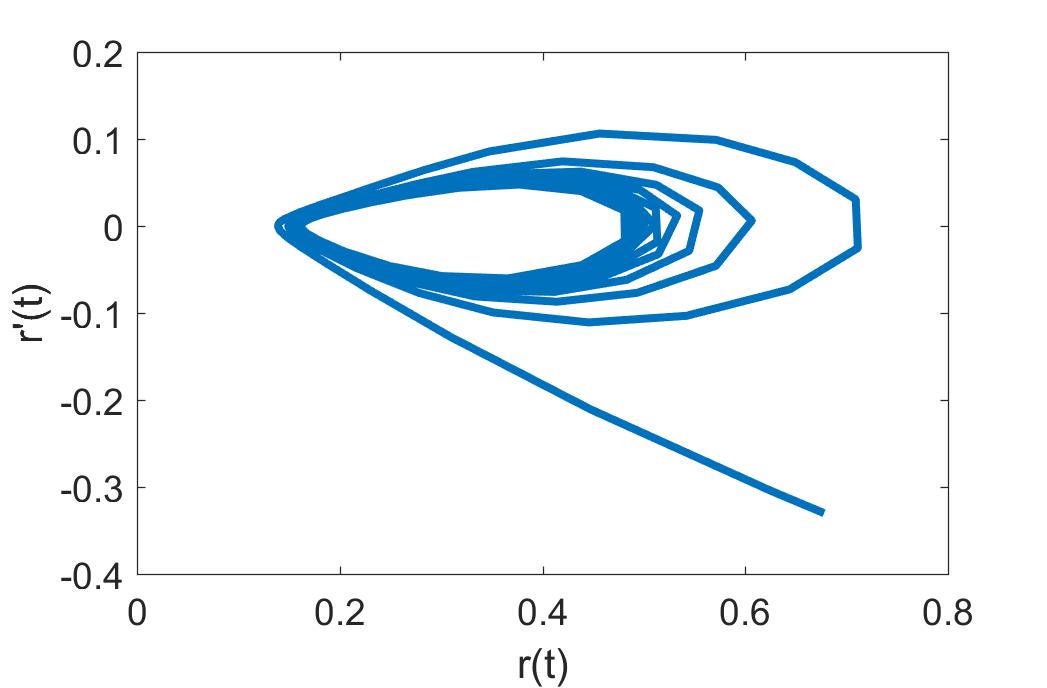}
			\caption{$r(t)$ and $r'(t)$}
		\end{subfigure}
		\vskip\baselineskip
		\begin{subfigure}[b]{0.45\textwidth}
			\includegraphics[width=\textwidth]{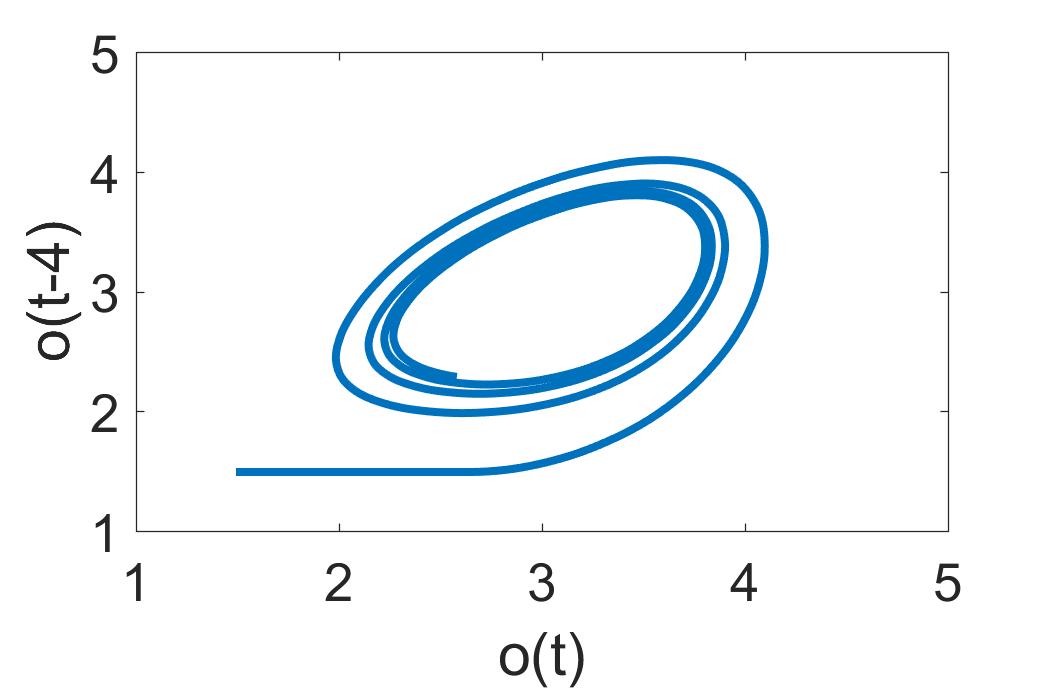}
			\caption{$o(t)$ and $o(t-4)$}
		\end{subfigure}
		\quad
		\begin{subfigure}[b]{0.45\textwidth}
			\includegraphics[width=\textwidth]{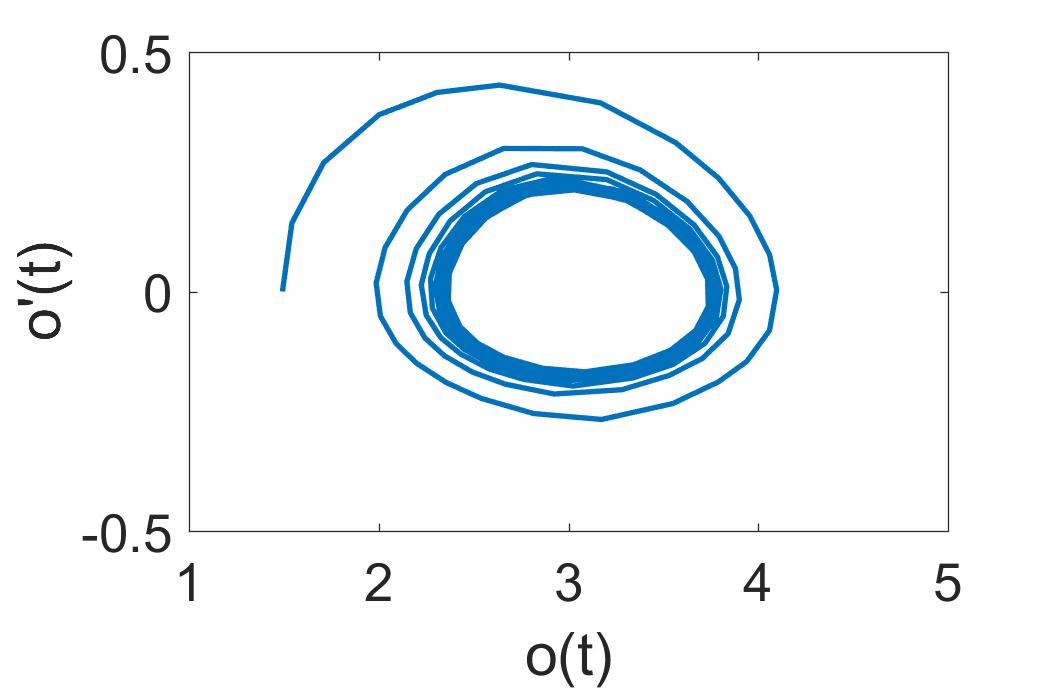}
			\caption{$o(t)$ and $o'(t)$}
		\end{subfigure}
		\caption{Plots using parameter values $A=1.5$, $p_2=1.8$, $p_3=0.2$, $p_4=5$, $p_5=0.11$, $p_6=0.9$ }\label{fig:lags}\label{fig:periodicity}
	\end{figure}
	
	\begin{proof}[Proof of Lemma~\ref{Th-tau}]
		
		Note that if a function $\Phi(t) \in C^2[a,b]$ is periodic with a period $T > 0$
		then $ \Phi(t+T) = \Phi (t)$ and $ \Phi'(t+T) = \Phi' (t)$.
		Let $(a(t),r(t),o(t))$ be a $T$-periodic solution of (\ref{a-1})--(\ref{a-3}) from Theorem~\ref{Th-periodic}.
		Then for this solution we have
		\begin{equation}\label{a-1-p}
		\tfrac{A}{1+p_2 o(t)r(t) } - p_3 a(t) = \tfrac{A}{1+p_2 o(t+T)r(t+T) } - p_3 a(t+T),
		\end{equation}
		\begin{equation}\label{a-2-p}
		- \tfrac{p_4}{p_4 + (o(t)r(t))^2} + 1 +  p_5 - p_6 r(t) = - \tfrac{p_4}{p_4 + (o(t+T)r(t+T))^2} + 1 +  p_5 - p_6 r(t+T),
		\end{equation}
		\begin{equation}\label{a-3-p}
		a(t -\tau) - o(t) =  a(t + T -\tau) - o(t+T),
		\end{equation}
		whence
		\begin{equation}\label{a-4-p}
		o(t)r(t) = o(t+T)r(t+T), \ \ a(t -\tau)   =  a(t + T -\tau) .
		\end{equation}
		Let $T = \tau > 0$. Then from (\ref{a-4-p}) for $t \in [0,\tau ]$ we have
		\begin{equation}\label{a-5-p}
		a_{\tau}(t) = a_1(t), \  \  o_1(t)r_1(t)= o_2(t+\tau) r_2(t +\tau)  .
		\end{equation}
		By (\ref{a-5-p}) at $t = 0$ we get
		$$
		a_{\tau}(0) = a_1(0) =e^{-p_3 \tau } a_{\tau}(0) + A \,e^{-p_3 \tau} \int \limits_0^{\tau} { \tfrac{  e^{p_3 s} \,ds}{ 1 + p_2 r_1(s) o_1(s)}  } ,
		$$
		$$
		o_0 = o_1(\tau)  =   e^{- \tau } o_0  + \int \limits_{-\tau}^0 { a_{\tau}(s) e^s \,ds}  ,
		$$
		$$
		r_0 =  r_1(\tau) = e^{-p_6 \tau } r_0  - p_4 \,e^{-p_6 \tau} \int \limits_0^{\tau} { \tfrac{  e^{p_6 s} \,ds}{ p_4 +   r^2_1(s) o^2_1(s)}  }
		+ \tfrac{p_5 + 1}{p_6} (1 - e^{-p_6 \tau}),
		$$
		whence
		$$
		a_{\tau}(0) =  \tfrac{A \,e^{-p_3 \tau}}{1 - e^{-p_3 \tau }} \int \limits_0^{\tau} { \tfrac{  e^{p_3 s} \,ds}{ 1 + p_2 r_1(s) o_1(s)}  } ,\
		o_0 =   \tfrac{1}{1 - e^{- \tau }} \int \limits_{-\tau}^0 { a_{\tau}(s) e^s \,ds},
		$$
		$$
		r_0 = \tfrac{ e^{-p_6 \tau}}{1 - e^{-p_6 \tau }} \Bigl[ - p_4  \int \limits_0^{\tau} { \tfrac{  e^{p_6 s} \,ds}{ p_4 +   r^2_1(s) o^2_1(s)}  }
		+ \tfrac{p_5 + 1}{p_6} ( e^{ p_6 \tau} - 1) \Bigr].
		$$
		Hence
		$$
		f_1(\tau) := (e^{ p_3 \tau} -1) \tfrac{a_{\tau}(0)}{A}  -  \int \limits_0^{\tau} { \tfrac{  e^{p_3 s} \,ds}{ 1 + p_2 r_1(s) o_1(s)}  } = 0,
		$$
		$$
		f_2(\tau) := (1 - e^{- \tau })o_0  -    \int \limits_{-\tau}^0 { a_{\tau}(s) e^s \,ds} = 0,
		$$
		$$
		f_3(\tau) : = \tfrac{1}{p_4} \Bigl[ \tfrac{p_5 + 1}{p_6} ( e^{ p_6 \tau} - 1) - ( e^{ p_6 \tau } -1) r_0 \Bigr] -  \int \limits_0^{\tau} { \tfrac{  e^{p_6 s} \,ds}{ p_4 +   r^2_1(s) o^2_1(s)}  } = 0.
		$$
		As $f_i(0) = 0$ and
		$$
		f'_1(\tau) = e^{ p_3 \tau} [ \tfrac{ p_3 a_{\tau}(0)}{A}  -    \tfrac{ 1 }{ 1 + p_2 r_0 o_0}  ] \equiv 0
		\text{ if } \tfrac{ p_3 a_{\tau}(0)}{A}  =   \tfrac{ 1 }{ 1 + p_2 r_0 o_0},
		$$
		$$
		f'_2(\tau) := e^{- \tau } ( o_0  -     a_{\tau}(-\tau) )  \equiv 0 \text{ if } o_0  =  a_{\tau}(-\tau) ,
		$$
		$$
		f'_3(\tau) : = e^{ p_6 \tau}  \Bigl[   \tfrac{p_5 + 1 - p_6 r_0 }{p_4}   - \tfrac{1}{ p_4 +   r^2_0 o^2_0}  \Bigr] \equiv 0
		\text{ if } \tfrac{p_5 + 1 - p_6 r_0 }{p_4} = \tfrac{1}{ p_4 +   r^2_0 o^2_0},
		$$
		then $f_i(\tau) = 0$ for all $\tau \geqslant 0$ provided
		$$
		o_0  =  a_{\tau}(-\tau),\ a_{\tau}(0) = \tfrac{A} { p_3 }\Bigl[1 + p_2 \sqrt{\tfrac{p_4(p_6 r_0 - p_5)}{p_5 + 1 - p_6 r_0}} \Bigr]^{-1},\
		\tfrac{p_5}{p_6} \leqslant r_0 \leqslant \tfrac{p_5+1}{p_6}.
		$$
	\end{proof}
	
	\section{Discussion}
	Existence of non-negative solutions, uniqueness of a steady state and its stability were analyzed for the minimal model of the HPA in \cite{Vinther}. The analytical difference between the model without delay that we studied and the minimal model is the type of the nonlinearities in the equation that describes production and degradation of the adrenocorticotropic hormone $a(t)$ and in the equation for the density $r(t)$ of glucocorticoid receptor. The nonlinear functions in the model that we analyze depend on the product $o(t)r(t)$  but in the minimal model of the HPA they depend on $o(t)$ only.  We obtained similar results but for more complicated non-linearity terms. We also analyzed stability for all possible cases of parameter ranges. We added Lyapunov stability analysis to show the non-linear stability result and we believe that non-linear stability analysis has never done before for this type of model.  
	
	For the model with delay we obtained a critical time delay value when the originally stable system becomes unstable as a pair of complex eigenvalues crosses the imaginary axis.  To illustrate the dynamics of complex eigenvalues with respect to time delay we used intersections of zero level sets between real and imaginary parts of the characteristic equation as a first approximation of a complex eigenvalue and after that we used Newton iterations to improve the accuracy.  We believe that stability analysis for HPA model with respect to time delay was not done before and our results are new in this area. 
	
	For certain parameter values we rigorously proved existence of non-negative periodic solutions for a given period  $T$ (under an assumption that a given period does not coincide with the value of time delay) and illustrated different periodic solutions numerically. Our numerical simulations revealed that the period of the solution is very sensitive to small perturbations of parameter values.

	\section{Appendix}
	\begin{lemma}[Routh Hurwitz Criteria for a Nonlinear System] \label{H-0}
		Suppose
		\begin{equation}\label{hur}
		\dot{\textbf{x}} = \textbf{f}(\textbf{x}), \  \textbf{f} : \mathbb{R}^3 \to \mathbb{R}^3, \   \textbf{x}(t_0) = \textbf{x}_0.
		\end{equation}
		Suppose $\textbf{x}_s$ is a fixed point of (\ref{hur}) and the characteristic polynomial at the
		fixed point is
		$$
		\lambda^3 + \alpha_1 \lambda^2 + \alpha_2 \lambda + \alpha_3 = 0, \ \ \alpha_i \in \mathbb{R}^1.
		$$
		If $\alpha_1 > 0,\, \alpha_3 > 0$ and $\alpha_1 \alpha_2 > \alpha_3$, then the fixed point is asymptotically stable. If
		$\alpha_1 < 0,\, \alpha_3 < 0$ or $\alpha_1 \alpha_2 < \alpha_3$, then the fixed point is unstable.
	\end{lemma}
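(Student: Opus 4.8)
The plan is to decouple the lemma into an analytic reduction and a self-contained algebraic root-location argument. First I would invoke the principle of linearized stability (Lyapunov's first method, a standard consequence of the Hartman--Grobman theorem for the $C^1$ field $\textbf{f}$): the equilibrium $\textbf{x}_s$ is locally asymptotically stable whenever every eigenvalue of the Jacobian $D\textbf{f}(\textbf{x}_s)$ has strictly negative real part, and is unstable as soon as one eigenvalue has strictly positive real part. Since the eigenvalues of $D\textbf{f}(\textbf{x}_s)$ are precisely the roots $\lambda_1,\lambda_2,\lambda_3$ of the characteristic cubic, the entire statement reduces to locating these roots from the signs of $\alpha_1$, $\alpha_3$ and $\alpha_1\alpha_2-\alpha_3$.

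The algebraic engine is Vieta's formulas $\alpha_1=-(\lambda_1+\lambda_2+\lambda_3)$, $\alpha_3=-\lambda_1\lambda_2\lambda_3$, together with the identity
$$
\alpha_1\alpha_2-\alpha_3=-(\lambda_1+\lambda_2)(\lambda_1+\lambda_3)(\lambda_2+\lambda_3),
$$
which I would verify by a direct expansion. As the coefficients are real, the roots are either all real or comprise one real root $\mu$ and a conjugate pair $p\pm iq$ with $q\neq0$; in the latter case the three discriminating quantities become $\alpha_1=-(\mu+2p)$, $\alpha_3=-\mu(p^2+q^2)$ and $\alpha_1\alpha_2-\alpha_3=-2p[(\mu+p)^2+q^2]$.

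For the stability direction I would assume $\alpha_1>0$, $\alpha_3>0$, $\alpha_1\alpha_2-\alpha_3>0$ and split into the two configurations. In the complex case $\alpha_3>0$ forces $\mu<0$ and $\alpha_1\alpha_2-\alpha_3>0$ forces $p<0$, so every real part is negative. In the all-real case $\alpha_3>0$ leaves an odd number of negative roots, and the hypothesis of exactly one negative root is ruled out by combining $\alpha_1>0$ (a negative root sum) with the positivity of a pairwise sum forced by the identity, a short contradiction; hence all three roots are negative. For the instability direction I would show that each single failing inequality already produces a root in the open right half-plane: $\alpha_1<0$ makes the real parts sum to $-\alpha_1>0$; $\alpha_3<0$ makes $\lambda_1\lambda_2\lambda_3>0$, which forces a positive root (the real one, in the complex case); and $\alpha_1\alpha_2<\alpha_3$ gives $(\lambda_1+\lambda_2)(\lambda_1+\lambda_3)(\lambda_2+\lambda_3)>0$, whence $p>0$ in the complex case, while in the all-real case three nonpositive pairwise sums cannot yield a strictly positive product. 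In every instance the linearization has an eigenvalue with positive real part, so $\textbf{x}_s$ is unstable.

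The one delicate point, which I would flag as the main obstacle, is the strictness in the instability direction. The boundary locus $\alpha_1\alpha_2=\alpha_3$ corresponds exactly to a purely imaginary pair ($p=0$), the marginal case on which the linearization theorem is silent; it is precisely the \emph{strict} inequalities in the hypotheses that push the offending root off the imaginary axis into the open right half-plane and thereby license the instability half of Lyapunov's first method.
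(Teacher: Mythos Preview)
Your argument is correct and cleanly organized: the reduction via Lyapunov's first method is the right move, the factorization $\alpha_1\alpha_2-\alpha_3=-(\lambda_1+\lambda_2)(\lambda_1+\lambda_3)(\lambda_2+\lambda_3)$ is the standard algebraic key (and follows at once from the symmetric-function identity $(a+b)(b+c)(c+a)=(a+b+c)(ab+bc+ca)-abc$), and your case split between three real roots and one real root plus a conjugate pair handles all configurations. The instability cases are also sound, and you are right to flag that the strict inequalities are exactly what keep the spectrum off the imaginary axis so that the Hartman--Grobman reduction applies.

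As for comparison: the paper does not actually prove this lemma. It is placed in the Appendix as a statement of the classical Routh--Hurwitz criterion for cubics and is invoked as a black box in the proof of Lemma~\ref{L-sss}. So there is no ``paper's own proof'' to compare against; your proposal supplies what the paper treats as standard background. If anything, your write-up is more informative than what the paper offers, since it makes transparent why the three sign conditions are exactly the right discriminants for a real cubic.
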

	
	\begin{theorem}\label{delay}
		(see \cite{Cook})
		Consider equation (\ref{eq:98}), where $P(z)$ and $Q(z)$ are analytic functions in a right half-plane $Re(z)>-\delta$, $\delta>0$, which satisfy the following conditions:
		\begin{enumerate}[i)]
			\item $P(z)$ and $Q(z)$ have no common imaginary zeros;
			\item $\overline{P(-iy)}=P(iy)$ and $\overline{Q(-iy)}=Q(iy)$ for real $y$;
			\item $P(0)+Q(0)\neq0$;
			\item There are at most a finite number of roots of (\ref{eq:98}) in the right half-plane when $\tau=0$; 
			\item $F(y)\equiv |P(iy)|^2-|Q(iy)|^2$ for real $y$ has at most a finite number of real zeros. 
		\end{enumerate}
		Under these conditions, the following statements are true.
		\begin{enumerate}[a)]
			\item Suppose that the equation $F(y)=0$ has no positive roots. Then if (\ref{eq:98}) is stable at $\tau=0$ it remains stable for all $\tau\geq 0$, whereas if it is unstable at $\tau=0$ it remains unstable for all $\tau\geq 0$.
			\item Suppose that the equation $F(y)=0$ has at least one positive root and that each positive root is simple. As $\tau$ increases, stability switches may occur. There exists a positive number $\tau_c$ such that the equation (\ref{eq:98}) is unstable for all $\tau>\tau_c$. As $\tau$ varies from $0$ to $\tau_c$, at most a finite number of stability switches may occur.
		\end{enumerate}
	\end{theorem}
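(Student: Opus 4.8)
The plan is to prove both conclusions by tracking how the zeros of the quasi-polynomial (\ref{eq:98}), $C(\lambda)=P(\lambda)+Q(\lambda)e^{-\lambda\tau}=0$, cross the imaginary axis as $\tau$ varies, and by counting the zeros in the open right half-plane via their continuous dependence on $\tau$. Write $N(\tau)$ for the number of zeros with $\mathrm{Re}\,\lambda>0$, which is finite by condition iv). Since $P,Q$ are analytic, the zeros depend continuously on $\tau$, so $N(\tau)$ can change only when a zero lies on the imaginary axis; condition iii) excludes a crossing at $\lambda=0$, because a zero there would give $P(0)+Q(0)=0$. The first step is the \emph{crossing characterization}: if $\lambda=iy$ solves (\ref{eq:98}) then, since $|e^{-iy\tau}|=1$, taking moduli of $P(iy)=-Q(iy)e^{-iy\tau}$ yields $|P(iy)|=|Q(iy)|$, i.e.\ $F(y)=0$. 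Conversely, at a real zero $y_0$ of $F$ one has $Q(iy_0)\neq0$ (otherwise i) would be violated), so the scalar equation $e^{-iy_0\tau}=-P(iy_0)/Q(iy_0)$ is solvable for $\tau$. By ii), $F$ is even, so it suffices to examine $y>0$, equivalently the cubic $F(x)$ in $x=y^2$ of (\ref{eq:99}).

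For part a), if $F$ has no positive zero then (\ref{eq:98}) has no purely imaginary zero for any $\tau\geqslant0$, and by iii) none at the origin either, so no zero ever touches the imaginary axis. The standard Rouch\'e / argument-principle estimate on large semicircular contours, together with iv), shows that $N(\tau)$ is finite and, since it cannot jump without an imaginary-axis crossing, constant in $\tau$. Therefore $N(\tau)\equiv N(0)$, which is precisely the stated dichotomy: stability (resp.\ instability) at $\tau=0$ persists for every $\tau\geqslant0$.

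For part b), I would list the finitely many positive zeros $y_1,\dots,y_m$ of $F$ --- finitely many by condition v) --- each assumed simple, and for each solve the angle condition to obtain the arithmetic families of critical delays $\tau_j^n=(\theta_j+2\pi n)/y_j$, $n=0,1,2,\dots$, already recorded in (\ref{eq:103}); at $\tau=\tau_j^n$ the conjugate pair $\pm iy_j$ sits on the imaginary axis. The crux is the \emph{transversality} step. Differentiating $C(\lambda)=0$ implicitly in $\tau$ gives $\bigl(d\lambda/d\tau\bigr)^{-1}=\bigl(P'(\lambda)+Q'(\lambda)e^{-\lambda\tau}\bigr)/\bigl(\lambda Q(\lambda)e^{-\lambda\tau}\bigr)-\tau/\lambda$, and a direct computation at $\lambda=iy_j$ reduces the real part to $\mathrm{sign}\bigl\{\tfrac{d}{d\tau}\mathrm{Re}\,\lambda\bigr\}=\mathrm{sign}\,F'(x_j)$ with $x_j=y_j^2$. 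Simplicity of $y_j$ forces $F'(x_j)\neq0$, so each crossing is transversal and alters $N(\tau)$ by exactly $\pm2$: by $+2$ (left to right, destabilizing) when $F'(x_j)>0$, and by $-2$ (right to left, stabilizing) when $F'(x_j)<0$. Finally, because $F$ increases to $+\infty$ --- here $F$ is the cubic of (\ref{eq:99}) with positive leading coefficient --- its largest positive zero is one through which $F$ passes from negative to positive, hence destabilizing, and its associated delays are spaced by the smallest gap $2\pi/y_{\max}$; this forces destabilizing crossings to ultimately outnumber stabilizing ones, so $N(\tau)$ becomes and stays positive beyond some $\tau_c$, while only finitely many switches occur on the bounded interval $[0,\tau_c]$.

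I expect the \textbf{main obstacle} to be this transversality computation together with the global bookkeeping it feeds. Reducing $\mathrm{Re}\bigl(d\lambda/d\tau\bigr)^{-1}$ at $iy_j$ to $\mathrm{sign}\,F'(x_j)$ requires careful algebra --- using that $\mathrm{Re}\,(d\lambda/d\tau)$ and $\mathrm{Re}\,(d\lambda/d\tau)^{-1}$ share a sign, and substituting $Q(iy_j)e^{-iy_j\tau}=-P(iy_j)$ --- and simplicity of the root is exactly what keeps this sign nonzero, so that the crossing is genuine. Converting the local crossing data into a permanent instability threshold $\tau_c$ is the more delicate point: one must verify that the destabilizing frequencies, in particular the largest zero $y_{\max}$, eventually dominate, which leans on the growth of $F$ to $+\infty$ (valid here since $\deg P=3>1=\deg Q$). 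Throughout I would invoke iv) and v) to keep every index set and root count finite, legitimizing the argument-principle accounting. As (\ref{eq:98}) is the classical Cooke--van den Driessche setting, the full details can be imported from \cite{Cook}; the above is the route I would follow to reassemble them.
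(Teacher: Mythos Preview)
The paper does not prove this theorem at all: it is stated in the Appendix as a result quoted verbatim from \cite{Cook}, with no argument supplied. Your outline is a correct sketch of the classical Cooke--van den Driessche proof from that reference---crossing characterization via $F(y)=0$, constancy of $N(\tau)$ absent imaginary crossings for part~a), and for part~b) the transversality identity $\mathrm{sign}\,\tfrac{d}{d\tau}\mathrm{Re}\,\lambda=\mathrm{sign}\,F'(x_j)$ together with the counting argument that the largest positive root of $F$ is destabilizing---so there is nothing to compare beyond noting that you have reconstructed precisely what the paper defers to \cite{Cook}. One small wording issue: your opening sentence says $N(\tau)$ is finite ``by condition iv)'', but iv) is stated only at $\tau=0$; finiteness for general $\tau$ needs the large-contour Rouch\'e bound you correctly invoke two paragraphs later.
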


	\markboth{C. JOHNSON, R. M. TARANETS, M. CHUGUNOVA, N. VASYLYEVA}{\rm EXISTANCE AND STABILITY ANALYSIS OF HPA AXIS MODEL}
	

	\vspace*{6pt}

\end{document}